\newtheorem{theorem}{Theorem}[section]
\newtheorem{lemma}[theorem]{Lemma}
\newtheorem{corollary}[theorem]{Corollary}
\newtheorem{case}{Case}
\newtheorem{claim}[theorem]{Claim}
\newtheorem{thmA}{Theorem}
\theoremstyle{definition}
\newtheorem{remark}[theorem]{Remark}
\newtheorem{definition}[theorem]{Definition}
\newtheorem{example}[theorem]{Example}
\newtheorem{notation}[theorem]{Notation}
\newcommand{\R}{\mathbb{R}}
\newcommand{\bS}{\mathbb{S}}
\newcommand{\Hy}{\mathbb{H}}
\newcommand{\E}{\mathbb{E}}
\newcommand{\from}{\colon\thinspace}
\DeclareMathOperator{\CAT}{CAT}
\DeclareMathOperator{\injrad}{inj}
\newcommand{\Mdel}{\overline{M}}
\newcommand{\coneoff}[1]{\widehat{#1}}
\newcommand{\dhat}{\hat{d}}
\newcommand{\Jacek}{\'Swi\k{a}tkowski}
\newcommand{\cover}[1]{\widetilde{#1}}
\newcommand{\bw}[2]{\operatorname{BW}_{#1}(#2)}
\newcommand{\Area}{\operatorname{Area}}
\newcommand{\lcrS}{\mathrm{lcr}_S}
\title[Coning off a hyperbolic manifold]{Coning off totally geodesic boundary components of a hyperbolic manifold}
\author{Colby Kelln}
\address{Department of Mathematics, 310 Malott Hall, Cornell University, Ithaca, NY 14853}
\email{ck765@cornell.edu}
\author{Jason Manning}
\address{Department of Mathematics, 310 Malott Hall, Cornell University, Ithaca, NY 14853}
\email{jfmanning@cornell.edu}
\begin{document}

\begin{abstract}
    Let $M$ be a compact hyperbolic manifold with totally geodesic boundary. 
    If the injectivity radius of $\partial M$ is larger than an explicit function of the normal injectivity radius of $\partial M$, we show that there is a negatively curved metric on the space obtained by coning each boundary component of $M$ to a point.
    Moreover, we give explicit geometric conditions under which a locally convex subset of $M$ gives rise to a locally convex subset of the cone-off.  

    Group-theoretically, we conclude that the fundamental group of the cone-off is hyperbolic and the $\pi_1$-image of the coned-off locally convex subset is a quasi-convex subgroup.  We also describe the Gromov boundary of the resulting hyperbolic group.
\end{abstract}

\maketitle

\section{Introduction}

The idea of coning off ends or boundary components of an aspherical manifold to obtain an aspherical space goes back at least to a comment of Gromov in \cite[7.A.VI]{Gromov93}.  This was fleshed out in the non-compact real hyperbolic case by Mosher and Sageev~\cite{MosherSageev}.  Mosher and Sageev begin with a hyperbolic manifold with cusps.  Under the condition that disjoint cusp cross sections with large (greater than $\pi$) injectivity radius can be chosen, they show that a negatively curved space can be obtained by cutting off the cusps and replacing them with cones.  The method is differential geometric, utilizing the same kind of warped product construction as in the Gromov-Thurston $2\pi$ theorem~\cite{BleilerHodgson}.  
Modifying the construction, a
wide variety of compact non-positively curved spaces can be obtained by replacing the cusps with mapping cylinders of fibrations~\cite{FM}.  (In this paper \emph{non-positively curved} means locally $\CAT(0)$, whereas \emph{negatively curved} means locally $\CAT(k)$ for some $k<0$.)

In the current paper we consider a related but different starting point:  a compact hyperbolic manifold with totally geodesic boundary.  We give explicit geometric conditions under which a similar ``coning'' construction gives rise to a negatively curved space.  Again, we need a lower bound on the injectivity radius of the boundary.  However, this bound is not absolute (as in the $2\pi$ theorem) but depends on the size of a maximal collar neighborhood of the boundary.  

We also give conditions of the same flavor under which a closed locally convex subset of the original manifold gives a locally convex (and hence $\pi_1$--injective) subset of the coned-off space.  These results are used in work of the second author with Ruffoni to give examples of negatively curved three-dimensional pseudo-manifolds whose fundamental groups do not act properly on any $\CAT(0)$ cube complex~\cite{MR}.

As a consequence of our geometric construction, we obtain group-theoretic conclusions about the fundamental groups of the coned-off spaces and of the coned-off locally convex subsets.  These are quantitative versions of results in ``group-theoretic Dehn filling''~\cite{GM, Osin, GMQC}. 
\subsection{Statement of main theorems}
First we say explicitly what we mean by coning off the boundary.
\begin{definition}\label{def:coneoff}
  Let $M$ be a compact manifold with boundary.  The \emph{cone-off} $\coneoff{M}$ of $M$ is the space obtained by attaching to $M$ a cone on each boundary component.  Formally we write
\begin{equation}\label{eq:coneoff}\tag{$\ddag$}
    \coneoff{M} = M \sqcup \left(\partial M\times [0,1]\right)/\sim,
\end{equation}  
  where we declare $x\sim (x\times 1)$ for any $x\in \partial M$ and $(x\times 0)\sim (y\times 0)$ whenever $x$ and $y$ lie in the same boundary component.

If $S\subset M$, we define $\coneoff{S}\subset \coneoff{M}$ to be the union of $S$ with the set of equivalence classes of points of the form $(s,\zeta)$ where $s\in S\cap \partial M$, $\zeta\in [0,1]$.
\end{definition}
The geometric conditions on $M$ will be phrased in terms of ``buffer width,'' which makes sense in a fairly general setting.
\begin{definition}
    Let $X$ be a geodesic metric space, and let $Y\subset X$ be closed.  The \emph{buffer width} of $Y$ in $X$, written $\bw{X}{Y}$, is half the length of the shortest non-degenerate local geodesic intersecting $Y$ only in its endpoints. 
\end{definition}
\begin{example}
    For $N$ a complete non-positively curved Riemannian manifold \emph{without} boundary, and $x\in N$, the buffer width $\bw{N}{x}$ is equal to the injectivity radius at $x$, usually written $\injrad(x)$.
    If $Y\subset N$ is a totally geodesic submanifold, then $\bw{X}{Y}$ is the normal injectivity radius of $Y$.
\end{example}
\begin{remark}
    If $\bw{X}{Y}$ is positive, then $Y$ is locally convex in $X$.  See Definition~\ref{def:localconvex} for the definition of local convexity.
\end{remark}

Before stating the main result we clarify some notation.  First, if $B$ is any metric space, $r>0$ and $A\subseteq B$, then $N_r(A)$ is the open $r$--neighborhood of $A$.
Second, if $X$ is a Riemannian manifold, $\injrad(X) =\inf_{x\in X}\injrad(x)$.
\begin{thmA}\label{maintheorem}
  Let $M$ be a compact hyperbolic manifold with totally geodesic boundary and suppose there are constants $b$ and $c$ so that
  \begin{equation}\label{eq:bufferM}\tag{A1} 0 < b < \bw{M}{\partial M},
    \end{equation}
  and
  \begin{equation}\label{eq:injradM}\tag{A2}  \injrad(\partial M)\ge c  > \pi/\sinh(b).
    \end{equation}
    Then there is a negatively curved metric $\dhat$ on $\coneoff{M}$ and a locally  isometric embedding\footnote{Here ``locally isometric'' refers to the length metrics -- there is an underlying Riemannian metric for which this restriction is a Riemannian isometry (and hence a local isometry of length metrics).
    We use a mixture of Riemannian and length space notions in this paper.  In case of ambiguity, we mean the length space notion unless we say otherwise.  In particular ``geodesic'' is interchangeable with ``shortest path'' unless modified.} of $M\smallsetminus N_b(\partial M)$ into $(\coneoff{M},\dhat)$ with image equal to $M \subset \coneoff{M}$.
  \end{thmA}
  We describe the boundary at infinity of the universal cover of $\coneoff{M}$ in Appendix~\ref{app:boundary}.

  \begin{remark}\label{rk:simpler}
    There exist $b$ and $c$ as in the statement (and thus a negatively curved metric on $\coneoff{M}$) whenever
    \begin{equation}\label{eq:simple}\tag{$\clubsuit$} \injrad(\partial M)\sinh(\bw{M}{\partial M}) > \pi .\end{equation}
    If one is only interested in the existence of a negatively curved metric there is therefore a slightly cleaner statement.  The extra notation makes it slightly easier to state and prove Theorem~\ref{subspacetheorem} below.

    Note that the condition~\eqref{eq:simple} in dimension $2$ implies an area condition.  If $M$ is a hyperbolic surface of genus $g$ with $k$ geodesic boundary circles, these circles will have disjoint collar neighborhoods of area at least  $2 \injrad(\partial M)\sinh(\bw{M}{\partial M})$.  In particular, condition~\eqref{eq:simple} implies
    \begin{equation}\label{areabound}
      \Area(M) > 2 k \pi.
    \end{equation}
    Using Gauss-Bonnet, we have $\Area(M) = 2\pi(2g + k - 2)$.  Combining with~\eqref{areabound}, we see that $g>1$.  This is obviously a necessary condition, since $\coneoff{M}$ is a closed surface of genus $g$.
  \end{remark}
\begin{corollary}
  Let $M_0$ be a compact hyperbolic manifold with totally geodesic boundary.  There is a finite-sheeted cover $M\to M_0$ so that $\coneoff{M}$ admits a negatively curved metric.
\end{corollary}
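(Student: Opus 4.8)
The plan is to find a finite-sheeted cover $M\to M_0$ satisfying condition~\eqref{eq:simple} of Remark~\ref{rk:simpler}; by Theorem~\ref{maintheorem} this yields a negatively curved metric on $\coneoff{M}$. Two facts drive the argument. First, buffer width does not decrease under covers: if $\partial M$ denotes the full preimage of $\partial M_0$, then any non-degenerate local geodesic in $M$ meeting $\partial M$ only at its endpoints projects to one in $M_0$ meeting $\partial M_0$ only at its endpoints, of the same length, so $\bw{M}{\partial M}\ge\bw{M_0}{\partial M_0}=:2b_0>0$ for every cover. Fix $b$ with $0<b<b_0$. It then suffices to produce a cover with $\injrad(\partial M)>\pi/\sinh(b)$, since that gives $\injrad(\partial M)\sinh(\bw{M}{\partial M})\ge\injrad(\partial M)\sinh(b)>\pi$.

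Second, since $\partial M$ is a closed hyperbolic $(n-1)$--manifold, it has no conjugate points, so $2\injrad(\partial M)$ equals the length of its shortest closed geodesic. Set $L=2\pi/\sinh(b)$; we must arrange that every closed geodesic of $\partial M$ has length $>L$. Only finitely many conjugacy classes in $\pi_1(M_0)$ meet some boundary subgroup $\pi_1(\partial_j M_0)\le\pi_1(M_0)$ in a primitive element of translation length at most $L$ (because $\partial M_0$ is compact hyperbolic); let $g_1,\dots,g_m$, of translation lengths $\ell_1,\dots,\ell_m\le L$, be representatives. Each $g_i$ has infinite order ($\pi_1(M_0)$ is torsion-free) and $\pi_1(M_0)$ is residually finite (a finitely generated linear group), so for each $i$ there is a finite-index normal subgroup $\Gamma_i\trianglelefteq\pi_1(M_0)$ in which the image of $g_i$ has order $>L/\ell_i$: choose a finite quotient in which $g_i,g_i^2,\dots,g_i^{\lceil L/\ell_i\rceil}$ are all nontrivial and pass to the normal core.

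Now let $\Gamma=\bigcap_{i=1}^m\Gamma_i$ and let $M\to M_0$ be the corresponding finite cover. Any closed geodesic $\delta$ of $\partial M$ is a $k$--fold cover ($k\ge 1$) of a primitive closed geodesic $\gamma$ of $\partial M_0$, where, by covering space theory, $k$ is the order in $\pi_1(M_0)/\Gamma$ of the element $g\in\pi_1(\partial_j M_0)$ represented by $\gamma$, and $\mathrm{length}(\delta)=k\,\ell(\gamma)$. If $\ell(\gamma)>L$ then already $\mathrm{length}(\delta)>L$; otherwise $g$ is conjugate to some $g_i$ with $\ell_i=\ell(\gamma)$, and since $\Gamma\le\Gamma_i$ the order $k$ is a multiple of the order of $g_i$ in $\pi_1(M_0)/\Gamma_i$, hence $k>L/\ell_i$ and $\mathrm{length}(\delta)=k\ell_i>L$. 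Thus $2\injrad(\partial M)>L$, i.e.\ $\injrad(\partial M)>\pi/\sinh(b)$, and~\eqref{eq:simple} holds for $M$.

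The main obstacle is exactly what the last two paragraphs navigate: a finite cover of $M_0$ need not restrict to a cover of $\partial M_0$ with large systole, and a cover of $\partial M_0$ with large systole need not extend over $M_0$, so one cannot simply invoke a systole-growth statement for the boundary. The point is instead to force growth from inside $\pi_1(M_0)$ via residual finiteness, using that passing to a cover can only lengthen a closed geodesic (the wrapping number is at least $1$), so no new short geodesics are created while the finitely many old short ones are made long.
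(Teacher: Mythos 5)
Your argument is correct and uses the same two ingredients as the paper: invariance of buffer width under covers, and residual finiteness of the linear group $\pi_1 M_0$ to kill short boundary loops in a finite cover. You take a slightly more elaborate route, however: you restrict attention to \emph{primitive} short boundary geodesics $g_1,\dots,g_m$ and then force each $g_i$ to have order greater than $L/\ell_i$ in the quotient so that the wrapped-up geodesics upstairs become long. The paper avoids this bookkeeping by listing all (not just primitive) conjugacy classes of loops of length at most $L$ in $\partial M_0$ --- there are still only finitely many --- and choosing a finite-index normal subgroup missing all of them; then a loop in $\partial M$ of length at most $L$ projects to a loop in $\partial M_0$ of the \emph{same} length, hence lies in one of the excluded classes, a contradiction. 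Both arguments work, but the paper's is a bit cleaner because it needs only nontriviality in the quotient rather than a lower bound on order.
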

\begin{proof}
  For any finite-sheeted cover $M\to M_0$, we have  $\bw{M}{\partial M} = \bw{M_0}{\partial M_0}$.  Indeed any geodesic segment orthogonal to the boundary in $M_0$ lifts to one in $M$; conversely any geodesic segment orthogonal to the boundary in $M$ projects to one in $M_0$.
  
   There are finitely many conjugacy classes $c_1,\ldots,c_k$ in $\pi_1 M_0$ representing nontrivial loops of length at most $\pi/\sinh(\bw{M}{\partial M})$ in $\partial M_0$.   Since $\pi_1 M_0$ is linear, it is residually finite~\cite{Malcev40}.  In particular there is a finite index normal subgroup of $\pi_1 M_0$ missing the conjugacy classes $c_1,\ldots,c_k$.  The corresponding finite-sheeted cover $M\to M_0$ satisfies $\injrad(\partial M) > \pi/\sinh(\bw{M}{\partial M})$.  In particular Theorem~\ref{maintheorem} applies to $M$ by Remark~\ref{rk:simpler}.
\end{proof}
Note in the above that any \emph{further} finite-sheeted cover $M'\to M$ also has the property that $\coneoff{M}'$ can be given a negatively curved metric.

Our second main result gives conditions under which a subset $S\subset M$ gives rise to a locally convex (and hence $\pi_1$--injective) subset $\coneoff{S}\subset \coneoff{M}$.  
 We will express these conditions in terms of projections to $\partial M$ as follows.

\begin{definition}[Projections of levels of $S$ to $\partial M$]\label{def:proj}
 For any $t\in (0,\bw{M}{\partial M})$, orthogonal projection gives a diffeomorphism $ \pi_t \from \partial N_t(\partial M) \to \partial M$.  For such $t$ and any $S\subset M$, we define
 \[ P_t(S) = \pi_t(S\cap \partial N_t(\partial M)).\]
 For $t = 0$, we define $P_0(S) = S\cap \partial M$.
\end{definition} 

\begin{thmA}\label{subspacetheorem}
  Let $M$ be compact hyperbolic with totally geodesic boundary, and suppose $S\subset M$ is a closed locally convex set.   For $t\in [0,\bw{M}{\partial M})$, let $P_t = P_t(S)$, as in Definition~\ref{def:proj}.
  Suppose that $b,c >0$ satisfy the conditions~\eqref{eq:bufferM} and~\eqref{eq:injradM} of Theorem~\ref{maintheorem} and additionally that there is a $b'\in (b,\bw{M}{\partial M})$ so that the following hold.
  \begin{enumerate}[({B}1)]
  \item\label{cond:PbIsotopicP0} There is an isotopy $\Phi\from [0,b']\times \partial M\to \partial M$ so that $\Phi_0$ is the identity and $\Phi_t(P_0)=P_t$ for all $t\in [0,b']$.\footnote{Here we use the notation $\Phi_t(x)=\Phi(t,x)$.}
    \item \label{cond:PtContainment}If $0\le t<s<b'$, then $P_t \supseteq P_s$, and
   \item\label{cond:Sbuffer} $\bw{\partial M}{P_t}> \frac{c}{2}$ for all $t\in [0,b')$.
  \end{enumerate}
  Then there is a negatively curved metric on $\coneoff{M}$ with respect to which $\coneoff{S}$ is locally convex.
\end{thmA}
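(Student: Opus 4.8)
The plan is to reprise the construction behind Theorem~\ref{maintheorem}, arranged so that it is compatible with $S$, and then to verify local convexity of $\coneoff{S}$ by a link analysis. Recall the shape of the metric $\dhat$ produced by Theorem~\ref{maintheorem}: it restricts to the hyperbolic metric on $M\smallsetminus N_b(\partial M)$, and on a neighborhood $C$ of the cone point (homeomorphic to $\partial M\times[0,1]/\!\sim$) it has the warped form $dr^2+f(r)^2g$, where $r$ is the distance to the cone point, $g$ is the hyperbolic metric $\partial M$ inherits from $M$, $f(0)=0$, $f$ is convex, and $f$ is chosen to make all sectional curvatures at most $-\varepsilon<0$. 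Condition~\eqref{eq:injradM} enters precisely because convexity of $f$ and the gluing along $\partial N_b(\partial M)$ force $f'(0)<\sinh(b)$, so $c>\pi/\sinh(b)$ leaves room to take $f'(0)\ge\pi/c$, which makes the link of the cone point --- an isometric copy of $(\partial M,f'(0)^2g)$, of systole at least $2\pi$ --- a CAT$(1)$ space.

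First I would use the isotopy $\Phi$ of \ref{cond:PbIsotopicP0} to straighten $S$ near the boundary: changing coordinates in the collar by $(x,t)\mapsto(\Phi_t^{-1}(x),t)$ (interpolated to the identity near $t=b'$ so that it extends over $M$) and pushing the hyperbolic metric forward by the resulting diffeomorphism, one replaces the pair $(M,S)$ by an isometric copy in which $S$ meets the collar in the honest product $P_0\times[0,b]$. This preserves \eqref{eq:bufferM} and \eqref{eq:injradM}; condition~\ref{cond:Sbuffer} becomes the assertion that $P_0$ has buffer width exceeding $c/2$ in each of the (now $t$-dependent, but still hyperbolic) metrics induced on the slices $\partial M\times\{t\}$, with the slice at $t=0$ still the original $g$, and \ref{cond:PtContainment} becomes a monotonicity statement for the cross-sections across the collar. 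Rerunning the warped construction on this copy --- the slice metrics remain hyperbolic, so the curvature computations of Theorem~\ref{maintheorem} still apply after, if necessary, slowing the radial parameter to control the derivatives of the slice metrics --- yields a negatively curved $\dhat$ on $\coneoff{M}$ for which $\coneoff{S}\cap C$ is the radial cone $\{(x,r):x\in P_0\}$ and, by \ref{cond:PtContainment}, $\coneoff{S}$ is radially star-shaped toward the cone point.

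It then remains to check that $\coneoff{S}$ is locally convex (Definition~\ref{def:localconvex}), which I would do point by point using the criterion that in a space of curvature bounded above a closed set is locally convex iff at each of its points the induced sublink is convex in the link of the ambient space. At points of $\coneoff{S}$ in the hyperbolic part this is immediate from local convexity of $S$. At a point of $C$ other than the cone point, the ambient link is a spherical join of a rescaling of the link of the relevant cross-section in its slice with the two radial directions --- both of which lie in $\coneoff{S}$ by star-shapedness --- so convexity of the sublink follows from local convexity of the cross-sections in the slices, and here the product structure from \ref{cond:PbIsotopicP0} is what identifies the link of $\coneoff{S}$. The essential case is the cone point: its link is the CAT$(1)$ manifold $L=(\partial M,f'(0)^2g)$, the sublink cut out by $\coneoff{S}$ is $P_0\subset L$, and convexity of $P_0$ in $L$ reduces to $P_0$ being locally convex (inherited from $S$) together with the absence of short geodesic chords, i.e.\ $\bw{L}{P_0}\ge\pi/2$; and $\bw{L}{P_0}=f'(0)\cdot\bw{\partial M}{P_0}>\tfrac{\pi}{c}\cdot\tfrac{c}{2}=\tfrac{\pi}{2}$ by \ref{cond:Sbuffer} and $f'(0)\ge\pi/c$. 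This is exactly where both \eqref{eq:injradM} and \ref{cond:Sbuffer} are used, and it explains the constant $c/2$ in \ref{cond:Sbuffer}. Finally, at points of $\coneoff{S}$ on the seam $\partial N_b(\partial M)$ one combines the hyperbolic-part and cone-region analyses, using the monotonicity from \ref{cond:PtContainment} to rule out a geodesic through such a point escaping $\coneoff{S}$ on the cone side.

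I expect the main obstacles to be, first, making the warped construction genuinely negatively curved once the twisting forced by the straightening step (equivalently, the non-product cross-sections of $S$) is built into the cone region --- this is a real curvature estimate rather than a formal computation --- and second, the cone-point case above: pinning down the link of the metrically singular cone point, justifying the link criterion for local convexity there, and verifying that the sharp bound $\bw{L}{P_0}\ge\pi/2$ is precisely what convexity of $P_0$ in $L$ demands.
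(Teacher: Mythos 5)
Your strategy---straighten $S$ to a product $P_0\times[0,b]$ via $(x,t)\mapsto(\Phi_t^{-1}(x),t)$, push forward the metric, and then rerun the warped cone construction on the straightened copy---runs into a real obstruction that you flag but do not resolve, and it is precisely the one the paper's proof is organized to avoid. After that coordinate change the collar metric acquires cross-terms $dt\otimes(\text{slice})$ unless $\Phi$ is constant in $t$; it is no longer a warped product $dr^2+f(r)^2 g$, so the Alexander--Bishop criterion (Theorem~\ref{AB:CBA}) no longer applies, and ``slowing the radial parameter'' does not remove the off-diagonal terms. Since the hypotheses impose no bound on $\partial_t\Phi$, there is no a priori control on the resulting curvature, and a negatively curved twisted cone may simply fail to exist under the stated assumptions. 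So this step is a genuine gap, not a computation you have postponed.

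The paper instead keeps the \emph{metric} untwisted and moves the twist into a homeomorphism. Its model space $X$ (Definition~\ref{def:X}) is exactly the one from Theorem~\ref{maintheorem}, with straight warped cones $C_f(N)$. The locally convex set is taken to be
\[
Y=(S\smallsetminus N_b(\partial M))\cup\bigl\{(t,s)\in C_f(N): s\in P_b,\ t\in[t_0,b]\bigr\},
\]
a radial cone on $P_b$ (note: $P_b$, not $P_0$). Local convexity of $Y$ in $X$ is proved directly by splitting into three cases (Lemmas~\ref{lem:Mpart}--\ref{lem:conepoint}), using CAT$(0)$-ness of $C_f(N)$, the monotonicity~\ref{cond:PtContainment}, the buffer width condition~\ref{cond:Sbuffer}, and a Euclidean-cone comparison at the cone point. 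Only \emph{after} local convexity is established is the isotopy $\Phi$ of~\ref{cond:PbIsotopicP0} used---and only to build a homeomorphism $\Psi$ of $\coneoff{M}$ carrying the image of $Y$ to $\coneoff{S}$. Since the conclusion of the theorem is ``there is a negatively curved metric on $\coneoff{M}$ with respect to which $\coneoff{S}$ is locally convex,'' a homeomorphic identification suffices; one never needs a twisted negatively curved metric.

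Two further remarks. Your link criterion (``locally convex iff each sublink is convex in the ambient link'') is not stated in a form that is true in the generality needed here, and the paper does not use it; in particular at points with $b<t(y)<b'$ the intermediate Case~\ref{case:between} of Lemma~\ref{lem:conepart} requires an actual argument combining~\ref{cond:PtContainment} with the local convexity radius of $S$ in $M$, which ``spherical join of a rescaled link with the two radial directions'' elides. On the other hand, your cone-point computation is essentially correct and matches the paper's: $\delta=f'(t_0^+)=\pi/c$ and the bound $\bw{\partial M}{P_b}>c/2$ force any chord of the cone leaving $P_b$ to subtend an angle greater than $\pi$ at the apex, hence to pass through the apex, which is the content of Lemma~\ref{lem:conepoint} (phrased via the Euclidean cone $[0,b-t_0)\times_t\delta J$ rather than the link $L=(\partial M,\delta^2 g)$, but the same inequality $\delta|J|>\pi$).
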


\begin{remark}\label{rem:isotopy}
    In the proof of Theorem~\ref{subspacetheorem}, the metric with respect to which $\coneoff{S}$ is locally convex is obtained from the metric in Theorem~\ref{maintheorem} via an isotopy.
\end{remark}

\begin{remark}\label{rem:conditions_eg}
  The hypotheses of Theorem~\ref{subspacetheorem} may be clarified by considering the situation of an embedded totally geodesic submanifold $S$ so that $\partial S\subset \partial M$.  For appropriately chosen $b,b'$, $S$ will always satisfy Condition~\ref{cond:PbIsotopicP0}.   However, $S$ only  
  satisfies hypothesis~\ref{cond:PtContainment} when it meets the boundary orthogonally (or not at all), in which case $P_t = P_0$ for small $t$.  (If $S$ meets the boundary, but not orthogonally, there may nonetheless be a ``thickening'' which satisfies hypothesis~\ref{cond:PtContainment}.)
  Condition~\ref{cond:Sbuffer} can often be ensured in a finite-sheeted cover, as the next Corollary shows.
\end{remark}

\begin{corollary}
  Let $M_0$ and $b,c>0$ satisfy the conditions of Theorem~\ref{maintheorem}, and let $S_0\subset M_0$ and $b'>0$ satisfy all the conditions of Theorem~\ref{subspacetheorem} except possibly Condition~\ref{cond:Sbuffer}.  Suppose further that $S_0$ is connected and $\pi_1S_0$ is separable in $\pi_1M_0$.  Then there are a finite-sheeted cover $M\to M_0$ so that $S_0$ lifts (homeomorphically) to $S\subset M$ and a negatively curved metric $\dhat$ on $\coneoff{M}$ so that $\coneoff{S}$ is locally convex in $\coneoff{M}$.
\end{corollary}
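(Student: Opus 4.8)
The plan is to replace $M_0$ by a finite cover $M$ in which $S_0$ lifts to a set $S$ satisfying Conditions~\ref{cond:PbIsotopicP0}--\ref{cond:Sbuffer}, and then to apply Theorem~\ref{subspacetheorem}. Write $G=\pi_1 M_0$ and $H=\pi_1 S_0$, viewed as a subgroup of $G$ (the inclusion $S_0\hookrightarrow M_0$ is $\pi_1$--injective, since a locally convex subset of a nonpositively curved manifold is). For any finite-index $K\le G$ containing $H$, let $M=M_K\to M_0$ be the corresponding cover; by the lifting criterion $S_0$ lifts to an embedded $S\subset M$, again closed, connected, and locally convex. First I would check that everything except possibly Condition~\ref{cond:Sbuffer} is automatically inherited: the normal injectivity radius of the totally geodesic boundary is a local quantity, so $\bw{M}{\partial M}=\bw{M_0}{\partial M_0}$ and the constraint $b'\in(b,\bw{M}{\partial M})$ is unchanged; $\injrad(\partial M)\ge\injrad(\partial M_0)\ge c$, since injectivity radius never decreases in a cover; and the isotopy $\Phi$ lifts (using $\Phi_0=\mathrm{id}$), compatibly with the normal projections $\pi_t$, so that Conditions~\ref{cond:PbIsotopicP0} and~\ref{cond:PtContainment} pass to $(M,S)$. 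So the whole task becomes: choose $K$ so that $\bw{\partial M}{P_t(S)}>c/2$ for every $t\in[0,b')$.

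Next I would reduce this family of inequalities to the single estimate $\bw{\partial M}{P_0(S)}>Lc/2$, where $L\ge 1$ bounds the bi-Lipschitz constants of the maps $\Phi_t\colon\partial M_0\to\partial M_0$ over $t\in[0,b']$ (finite by compactness of $[0,b']$; the lifted isotopy has the same bound). If $\gamma$ is a non-degenerate local geodesic in $\partial M$ of length $\le c$ meeting $P_t(S)=\Phi_t(P_0(S))$ only at its endpoints, then $\Phi_t^{-1}\circ\gamma$ is a non-degenerate path meeting $P_0(S)$ only at its endpoints and of length $\le Lc$; replacing it by the local geodesic homotopic to it rel endpoints (which cannot increase length, $\partial M$ being negatively curved) and, if that arc still meets $P_0(S)$ in its interior, truncating at the first such point, yields a non-degenerate local geodesic for $P_0(S)$ of length $\le Lc$. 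I expect this tautening step to be the fiddliest point, since ``local geodesic meeting $Y$ only at its endpoints'' is destroyed by applying the diffeomorphism $\Phi_t^{-1}$ and has to be recovered by hand.

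Finally, the heart of the argument: arranging $\bw{\partial M}{S\cap\partial M}>Lc/2$ in a finite cover using that $H$ is separable in $G$. Fix a lift $\cover{S_0}\subset\cover{M_0}$ of $S_0$ to the universal cover; it is convex (connected and locally convex in the $\CAT(0)$ space $\cover{M_0}$, and closed), and its stabilizer $H$ acts cocompactly on it, $S_0$ being compact. A non-degenerate local geodesic in $\partial M$ of length $\le Lc$ meeting $S\cap\partial M$ only at its endpoints lifts to a geodesic $\widehat\gamma$ in $\cover{M_0}$ which, after translating by an element of $K$, runs from $\cover{S_0}$ to $k\cover{S_0}$ for some $k\in K$ and has interior disjoint from every $K$--translate of $\cover{S_0}$; in particular $d_{\cover{M_0}}(\cover{S_0},k\cover{S_0})\le Lc$. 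Cocompactness of the $H$--action makes the set $\mathcal D=\{\,HgH : d_{\cover{M_0}}(\cover{S_0},g\cover{S_0})\le Lc\,\}$ of double cosets finite, and the trivial double coset $H$ cannot arise this way: if $k\in H$ then $\widehat\gamma$ is a geodesic with both endpoints in the convex set $\cover{S_0}$, hence contained in $\cover{S_0}$, contradicting that its interior avoids $\cover{S_0}$. Separability of $H$ then lets me choose $K$ to be a finite-index subgroup of $G$ containing $H$ and disjoint from each of the finitely many nontrivial double cosets in $\mathcal D$; for this $K$ no such $\gamma$ exists, i.e.\ $\bw{\partial M}{S\cap\partial M}>Lc/2$. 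By the previous two paragraphs $(M,S)$ then satisfies all the hypotheses of Theorem~\ref{subspacetheorem} with the same $b,c,b'$, and that theorem supplies a negatively curved metric $\dhat$ on $\coneoff M$ for which $\coneoff S$ is locally convex.
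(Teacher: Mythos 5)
Your proof takes a genuinely different route from the paper's, which is considerably shorter. The paper invokes Scott's \emph{topological} characterization of subgroup separability directly: writing $\cover{M}_0$ for the cover of $M_0$ with $\pi_1\cover{M}_0 = \pi_1 S_0$, it takes the compact set $\overline{N_c(\cover{S}_0)}\subset\cover{M}_0$, applies Scott to find a finite cover $M$ of $M_0$ in which this compact set embeds, and (rather tersely) asserts that the resulting lift $S$ satisfies~\ref{cond:Sbuffer}, using that $\cover{S}_0$ has infinite buffer width in $\cover{M}_0$. You instead use the \emph{algebraic} form of separability: you reduce the whole family of inequalities $\bw{\partial M}{P_t}>c/2$ to the single level $t=0$ via a bi-Lipschitz bound $L$ on the isotopy $\Phi$, prove $\bw{\partial M}{P_0}>Lc/2$ in a suitable cover by a double-coset finiteness argument in the universal cover, and invoke separability to find a finite-index $K\ge H$ missing the finitely many dangerous double cosets. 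Your argument is more explicit about the group theory, and it uses condition~\ref{cond:PbIsotopicP0} in a way the paper's proof does not. What the paper's route buys is brevity and avoiding the $t$-reduction altogether, since the lifted $\cover{S}_0$ controls \emph{all} the $P_t$ at once.

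There is, however, one unacknowledged gap in your reduction step: condition~\ref{cond:PbIsotopicP0} only provides a \emph{topological} isotopy $\Phi\from[0,b']\times\partial M\to\partial M$, i.e.\ a continuous family of homeomorphisms. Such maps need not be Lipschitz, so the constant $L$ you introduce (``bounds the bi-Lipschitz constants of $\Phi_t$\dots finite by compactness of $[0,b']$'') does not exist in general; compactness of the parameter interval does not help if the individual $\Phi_t$ are merely homeomorphisms. If $\Phi$ is smooth (or even Lipschitz in both variables), your compactness argument does produce $L$, and then your reduction works as intended --- but that is an extra hypothesis. By contrast, the paper's choice of compact set sidesteps the need for any metric control on $\Phi$. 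You do correctly flag the other delicate point (tautening $\Phi_t^{-1}\circ\gamma$ and re-extracting a nondegenerate local geodesic avoiding $P_0$ in its interior, which has genuine edge cases, e.g.\ when the pulled-back path is a contractible loop). The double-coset part of your argument --- finiteness of $\mathcal D$ by cocompactness of the $H$-action on $\cover{S}_0$, exclusion of the trivial double coset via convexity of $\cover{S}_0$, and choice of $K$ by separability --- is correct and complete.
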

\begin{proof}
  Scott's characterization of subgroup separability~\cite{Scott78} means that any compact subset $K$ of the cover $\cover{M}_0$ of $M_0$ corresponding to $\pi_1S_0$ embeds in a finite-sheeted cover $M$ of $M_0$.  The locally convex set $S_0$ lifts to a set $S = \cover{S}_0$ in $\cover{M}_0$ with infinite buffer width.  Taking $K$ to be a closed $c$--neighborhood of $\cover{S}_0$, the corresponding finite cover $M$ contains a lifted copy $S$ of $S_0$, which moreover satisfies condition~\ref{cond:Sbuffer}.
\end{proof}

We remark that the condition of separability is \emph{always} true if $\dim(M) \le 3$ by results of Scott~\cite{Scott78} and Wise~\cite{Wise21}.

We state a group-theoretic corollary.  
\begin{corollary}\label{cor:grouptheory}
  Suppose that $M$ is a compact hyperbolic manifold with totally geodesic boundary satisfying the conditions of Theorem~\ref{maintheorem}.
  Let $\Gamma = \pi_1M$, and let $K\lhd \Gamma$ be the subgroup normally generated by loops in $\partial M$.
  Then $\Gamma/K$ is torsion-free, hyperbolic, and has cohomological dimension equal to $\dim(M)$.

  Suppose in addition that $H<\Gamma$ is the fundamental group of a connected locally convex subset $S\subset M$ satisfying the conditions of Theorem~\ref{subspacetheorem}.  Then the image of $H$ in $\Gamma/K$ is quasi-convex.
\end{corollary}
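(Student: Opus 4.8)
The plan is to read every conclusion off the negatively curved metric supplied by Theorem~\ref{maintheorem} (respectively Theorem~\ref{subspacetheorem}), together with standard facts about groups acting geometrically on $\CAT(\kappa)$ spaces with $\kappa<0$ and a small amount of algebraic topology of pseudomanifolds. First I would identify the quotient topologically: since $\coneoff{M}$ is $M$ with a cone attached on each component of $\partial M$, van Kampen's theorem gives $\pi_1\coneoff{M}\cong \Gamma/K$ with $K$ exactly the subgroup normally generated by loops in $\partial M$. By Theorem~\ref{maintheorem} this space carries a negatively curved metric $\dhat$; being compact and locally $\CAT(\kappa)$ for some $\kappa<0$, its universal cover $\cover{\coneoff{M}}$ is a complete $\CAT(\kappa)$ space by the Cartan--Hadamard theorem, hence proper, Gromov hyperbolic, and contractible. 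Thus $\Gamma/K$ acts properly and cocompactly by isometries on a proper hyperbolic $\CAT(0)$ space, so it is a hyperbolic group by the \v{S}varc--Milnor lemma; and since a finite group of isometries of a $\CAT(0)$ space fixes a point, freeness of the deck action forces $\Gamma/K$ to be torsion-free. Equivalently, $\coneoff{M}$ is a finite aspherical complex, a $K(\Gamma/K,1)$.

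Next, the cohomological dimension. Since $\coneoff{M}$ is a finite complex of dimension $n$ ($M$ is an $n$--manifold and each attached cone on an $(n-1)$--manifold is $n$--dimensional) and is aspherical, $\operatorname{cd}(\Gamma/K)\le n$. For the reverse inequality I would verify that $\coneoff{M}$ is a closed connected $n$--pseudomanifold: in a triangulation of $M$, an $(n-1)$--simplex lying in $\partial M$ is a face of exactly one $n$--simplex of $M$ and one of the adjacent cone, a simplex $v\ast\tau$ interior to a cone on a component $\partial_iM$ lies in exactly the two simplices $v\ast\sigma$ with $\tau\subset\sigma\subset\partial_iM$ (as $\partial_iM$ is a closed manifold), and connectedness of $M$ yields strong connectivity. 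Hence $H_n(\coneoff{M};\Z/2)\cong\Z/2\ne 0$, which by asphericity equals $H_n(\Gamma/K;\Z/2)$ and forces $\operatorname{cd}(\Gamma/K)\ge n$, so $\operatorname{cd}(\Gamma/K)=n$. This pseudomanifold bookkeeping, and underneath it the verification that the metrics of Theorems~\ref{maintheorem}--\ref{subspacetheorem} are honestly locally $\CAT(\kappa)$ \emph{at the cone points} so that Cartan--Hadamard, contractibility, and convexity of preimages apply verbatim, is the part I expect to require the most care; the rest is formal.

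Finally, assume $S$ and the constants $b,b',c$ satisfy the hypotheses of Theorem~\ref{subspacetheorem}, and work with the metric it produces, for which $\coneoff{S}$ is locally convex; note $\coneoff{S}$ is compact since $S$ is closed in the compact $M$. The inclusion $S\hookrightarrow M\hookrightarrow\coneoff{M}$ factors as $S\hookrightarrow\coneoff{S}\hookrightarrow\coneoff{M}$, so the image of $H=\pi_1S$ in $\Gamma/K$ equals the image of $\pi_1\coneoff{S}\to\pi_1\coneoff{M}$; and since $\coneoff{S}$ is $S$ with cones attached along the components of $S\cap\partial M$, the map $\pi_1S\to\pi_1\coneoff{S}$ is surjective, so this image is precisely the image of $\pi_1\coneoff{S}$. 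Because $\coneoff{S}$ is locally convex in the negatively curved $\coneoff{M}$, it is $\pi_1$--injective and the components of its preimage in $\cover{\coneoff{M}}$ are convex subsets, each isometric to the universal cover of $\coneoff{S}$. Fix such a component $C$: it is convex, hence quasi-convex, in the $\delta$--hyperbolic space $\cover{\coneoff{M}}$, and its stabilizer in $\Gamma/K$---which is exactly the image of $\pi_1\coneoff{S}$---acts cocompactly on $C$ since $\coneoff{S}$ is compact. A subgroup of a hyperbolic group acting cocompactly on a quasi-convex subset of a space carrying a geometric action of the group is quasi-convex, so the image of $H$ in $\Gamma/K$ is quasi-convex, as claimed.
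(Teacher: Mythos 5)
Your argument for the first part (hyperbolicity, torsion-freeness, and cohomological dimension $n$) is correct and fills in details the paper leaves implicit; the pseudomanifold/$\Z/2$-fundamental-class computation for $\operatorname{cd}\geq n$ is the right idea. The rest of your machinery (van Kampen, Cartan--Hadamard, \v{S}varc--Milnor, local convexity forcing convex preimages) is also sound.

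The gap is in the sentence asserting that ``the map $\pi_1S\to\pi_1\coneoff{S}$ is surjective.'' This is false whenever some boundary component $N$ of $M$ meets $S$ in a \emph{disconnected} set. Recall from Definition~\ref{def:coneoff} that the identification $(x,0)\sim(y,0)$ happens for any $x,y$ in the same component $N$ of $\partial M$, not just for $x,y$ in the same component of $S\cap N$. Thus if $S\cap N$ has components $C_1,\dots,C_k$ with $k\ge 2$, the cones $C_1\times[0,1],\dots,C_k\times[0,1]$ inside $\coneoff{S}$ all share the same apex, and (when $S$ is path-connected) this creates a new free factor $F_{k-1}$ in $\pi_1\coneoff{S}$ not in the image of $\pi_1 S$. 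So the image of $H$ in $\Gamma/K$ need not equal the image of $\pi_1\coneoff{S}$, and your conclusion only directly yields quasi-convexity of the latter. To close the gap you need the additional observation (which is the one the paper uses) that $\bar H$ is a \emph{free factor} of $\pi_1\coneoff{S}$, and that free factors are always quasi-isometrically embedded, hence quasi-convex inside a hyperbolic group. With that added step your proof goes through; without it, it only covers the case where each boundary component meets $S$ in a connected or empty set.
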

\begin{proof}
    The first part of the Corollary is immediate from Theorem~\ref{maintheorem}.  If each component of $\partial M$ intersects $S$ either in a connected set or in the empty set, the second part is immediate from Theorem~\ref{subspacetheorem}, since the image $\bar H$ of $H$ in $\Gamma/K$ can be identified with the fundamental group of $\coneoff S$.  Since $\coneoff M$ is negatively curved and $\coneoff S$ is a locally convex subset, the fundamental group of $\coneoff S$ injects into that of $\coneoff M$ (see \cite[II.4.14]{BH}). 
    If there are components of $\partial M$ whose intersection with $S$ is disconnected, then $\pi_1\coneoff S$ is still a quasi-convex subgroup, but is not equal to $\bar H$;  rather, it contains $\bar H$ as a free factor
    by van Kampen's theorem.  But a free factor of a group is always quasi-isometrically embedded, so the quasi-convexity of $\bar H$ follows from the quasi-convexity of $\pi_1\coneoff S$.
\end{proof}
The first part of Corollary~\ref{cor:grouptheory} is related to the Relatively Hyperbolic Dehn Filling Theorem~\cite{GM,Osin} -- the statement about cohomological dimension is related to work of Sun and Petrosyan~\cite{SP}.  The second part is connected to \cite[Proposition 2.5]{GMQC}.  All those statements though are for ``sufficiently large'' or ``sufficiently $H$--wide'' fillings.  In principle the bounds for those results can be made explicit, but will not be as tight as the ones which we have provided.

\subsection*{Acknowledgments} We would like to thank Daniel Groves for helpful conversations related to the correct statement of Theorem~\ref{subspacetheorem}.  Thanks also to Lorenzo Ruffoni for helpful conversations and in particular asking the question which inspired this project. Thank you to Sam Taylor for a question that helped clarify the language of Remark~\ref{rem:conditions_eg}. Finally we thank the referee for thoughtful comments and suggestions.
CK was supported by the NSF under grant DGE-1650441.  JM was partially supported by the Simons Foundation, grant number 942496.

\section{Preliminaries}

\subsection{Warped products}
We recall here the definition of a warped product of length spaces, which generalizes a warped product of Riemannian manifolds.
For more information see \cite[\S 3.6.4]{BBI}.
\begin{definition}
    Let $(X, d_X)$ and $(Y, d_Y)$ be  length spaces and let $f\from X\to [0, \infty)$ be a continuous function.  For a Lipschitz path $\gamma\from [a,b]\to X\times Y$, write $\gamma_X$ and $\gamma_Y$ for the projections to $X$ and $Y$ respectively.  These are also Lipschitz paths so their speeds $|\gamma_X'|$, $|\gamma_Y'|$ are defined almost everywhere \cite[Theorem 2.7.6]{BBI}.  Define a length
         \[\ell_f(\gamma):= \int_a^b \sqrt{|\gamma'_X(t)|^2 + f^2(\gamma_X(t))|\gamma'_Y(t)|^2}dt,\]
    and use this to define a length pseudo-metric $d_f$.  The \emph{warped product} $X\times_f Y$ is the canonical metric space quotient of $(X\times Y,d_f)$.
  (This is homeomorphic to $X\times Y$ if $f$ has no zeroes; otherwise some copies of $Y$ are smashed to points.)  The space $X$ is referred to as the \emph{base}; the space $Y$ as the \emph{fiber}.
\end{definition}
If $(X,g_X)$ and $(Y,g_Y)$ are Riemannian manifolds, and $f$ has no zeroes, then the warped product metric on $X\times_f Y$ is induced by the Riemannian metric
\[ g = g_X + f^2 g_Y.\]
For example, the warping function $f\equiv 1$ yields the standard product metric. 
More interestingly, hyperbolic space $\Hy^{n+1}$ is isometric to a warped product in many ways, for example $\R~\times_{e^t}~\E^n$ or $[0,\infty)\times_{\sinh(t)} \bS^n$.  Most relevant for our purposes is the isometry
\[ \Hy^{n+1} \cong \R\times_{\cosh(t)} \Hy^n.\]
We record the following consequence for later use.
\begin{remark}\label{rem:cosh}
For a compact hyperbolic manifold $M$ with totally geodesic boundary, and $r\le \bw{M}{\partial M}$, there is an isometry
\[ N_r(\partial M) \cong [0,r)\times_{\cosh(t)} \partial M.\]
The right-hand side can be identified with a subset of the normal bundle of $\partial M$ in $M$, endowed with the pullback of the hyperbolic metric on $M$ under the normal exponential map.
\end{remark}

We will largely be interested in examples in which the base is an interval and the warping function is increasing,  convex, and has a unique zero.  We record the following observations about geodesics in this setting.

\begin{lemma}\label{lem:conegeodesics}
    Let $I\subset \R$ be an interval with $\inf(I)=t_0$, and let $f:I\to [0,\infty)$ be continuous and satisfy $f^{-1}(0)=\{t_0\}$.
    Consider $C=I\times_f F$ for $F$ a complete length space.  Let $\ast$ be the cone point, ie the point of $C$ corresponding to the equivalence class $\{t_0\}\times F$.  Let
    \[ \pi_I\from C\to I,\quad \pi_F\from C\smallsetminus\{\ast\}\to F \]
    be the projections, and
    let $\sigma:[a, a']\to C$ be geodesic.
    \begin{enumerate}
        \item\label{itm:locallyconstant} If $\sigma^{-1}(\ast)$ is nonempty, then $\sigma^{-1}(\ast) = \{s_0\}$ for some $s_0\in I$ and $\pi_F\circ \sigma|_{I\smallsetminus\{s_0\}}$ is locally constant.
    \end{enumerate}
    If moreover $f$ is strictly increasing on $I$, then
    \begin{enumerate}[resume]
        \item\label{itm:depthconvex} the composition $\pi_I\circ \sigma$ is convex; and
        \item\label{itm:fibergeodesic} if $\sigma$ misses the cone point, then $\pi_F\circ\sigma$ is a reparametrized geodesic.
    \end{enumerate}
    If moreover $f$ is convex, then
    \begin{enumerate}[resume]
        \item\label{itm:whenconepoint} the geodesic $\sigma$ passes through the cone point if and only if \[d_F(\pi_F(\sigma(a)),\pi_F(\sigma(a')))\ge \pi.\]
    \end{enumerate}
\end{lemma}

\subsection{Local convexity}\label{ss:lc}
\begin{definition}\label{def:localconvex}
    Let $Z$ be a geodesic space and $S\subseteq Z$ a subset.  The set $S$ is \emph{convex} if every geodesic in $Z$ with endpoints in $S$ lies in $S$.

    The set $S$ is \emph{locally convex} if for every point $s\in S$ there is a neighborhood $U$ of $s$ in $Z$ so that $S\cap U$ is convex in $Z$.   
\end{definition}

\begin{definition}
    The \emph{local convexity radius} at a point $s$ of a subset $S$ of a geodesic space $Z$ is 
    $$\lcrS(s) = \sup\{r\in \R : B_r(s)\cap S \mbox{ is convex in Z}\}.$$
    The \emph{local convexity radius of $Q\subseteq S$ in $Z$} is $$\lcrS(Q) = \inf_{s\in Q} \lcrS(s).$$
\end{definition}

\begin{lemma}\label{lem:lcrpositive}
    Let $Z$ be a complete non-positively curved metric space, and $S$ a compact, locally convex subset of $Z$. 
    The local convexity radius of $S$ in $Z$ is positive. 
\end{lemma}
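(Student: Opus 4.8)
The plan is to argue by contradiction from the compactness of $S$, after first reducing the local question near a point of $S$ to a computation inside a single $\CAT(0)$ ball of $Z$.

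First I would record the standard consequence of non-positive curvature that every $z\in Z$ admits a radius $r_z>0$ with the following property: for every $\rho\le r_z$, the ball $B_\rho(z)$ is convex in $Z$, is $\CAT(0)$ in the restricted metric, and its own geodesics coincide with the geodesics of $Z$ joining pairs of its points. The reason is that a $\CAT(0)$ neighbourhood $U$ of $z$ is given by hypothesis, and if $r_z$ is chosen so that $B_{3r_z}(z)\subseteq U$ then any $Z$-geodesic between two points of $B_\rho(z)$ has length less than $2\rho$, hence stays in $B_{3\rho}(z)\subseteq U$, where it is a geodesic and where balls are convex because the distance to a point is a convex function. Combining this with local convexity of $S$ and the elementary fact that an intersection of two convex subsets of a geodesic space is convex, one gets $\lcrS(s)>0$ for every $s\in S$: taking $\rho\le r_s$ small enough that $B_\rho(s)$ lies in a neighbourhood $V_s$ of $s$ with $S\cap V_s$ convex in $Z$, the set $B_\rho(s)\cap S=B_\rho(s)\cap(S\cap V_s)$ is convex in $Z$ as an intersection of two convex sets.

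Now suppose, for contradiction, that $\lcrS(S)=0$, and choose $s_n\in S$ with $\lcrS(s_n)\to 0$; by compactness of $S$, after passing to a subsequence we may assume $s_n\to s\in S$. Using the previous paragraph, fix $\rho>0$ small enough that $B_\rho(s)$ is convex in $Z$, is $\CAT(0)$, and $B_\rho(s)\cap S$ is convex in $Z$. For all large $n$ we have $d(s,s_n)<\rho/4$, so $B_{\rho/4}(s_n)\subseteq B_{\rho/2}(s)\subseteq B_\rho(s)$. I claim $\lcrS(s_n)\ge\rho/4$ for such $n$, which contradicts $\lcrS(s_n)\to 0$ and finishes the proof. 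To prove the claim, take $p,q\in B_{\rho/4}(s_n)\cap S$ and any geodesic $\sigma$ of $Z$ from $p$ to $q$. Since $p,q\in B_\rho(s)$ and $B_\rho(s)$ is convex in $Z$, the path $\sigma$ lies in $B_\rho(s)$ and is a geodesic of the $\CAT(0)$ space $B_\rho(s)$; since balls in $\CAT(0)$ spaces are convex and $B_{\rho/4}(s_n)$ is a metric ball of $B_\rho(s)$ (it is already contained in $B_\rho(s)$), we get $\sigma\subseteq B_{\rho/4}(s_n)$. Since moreover $p,q\in B_\rho(s)\cap S$, which is convex in $Z$, we get $\sigma\subseteq S$. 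Hence $\sigma\subseteq B_{\rho/4}(s_n)\cap S$, so this set is convex in $Z$.

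The step I expect to need the most care is the preliminary one — checking that small metric balls of a complete non-positively curved space really are convex in the ambient space, and not merely contained in some $\CAT(0)$ neighbourhood. It is worth noting why this is not actually an obstruction for the main argument: I only need it at the single limit point $s$, since the balls $B_{\rho/4}(s_n)$ are handled for free, being metric balls of the fixed $\CAT(0)$ space $B_\rho(s)$; in particular one never needs a uniform positive lower bound for a ``convexity radius of $Z$'' along $S$. Everything else — the compactness bookkeeping and unwinding the definition of $\lcrS$ — is routine.
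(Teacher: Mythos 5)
Your proof is correct and follows essentially the same strategy as the paper's: pass to a convergent subsequence by compactness, then obtain a uniform positive lower bound on $\lcrS$ near the limit by intersecting a small ball (convex by the local $\CAT(0)$ property) with a set whose intersection with $S$ is already known to be convex. The paper packages this second step as lower semi-continuity of $\lcrS$ and lifts to the universal cover of $Z$ rather than working inside a local $\CAT(0)$ neighborhood, but the geometric content is the same.
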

\begin{proof}
    Take a cover of $S$ by balls around all $x\in S$ of radius $\frac{r_x}{2}$ for where $$r_x = \min\{\lcrS(x), \bw{Z}{x}\}.$$  
    By compactness there is a finite set $\{x_1,\ldots,x_m\}$ so that for every $c\in S$ there is an $i$ so that $c$ is contained in the open $r_{x_i}/2$--ball around $x_i$.  We set $r_i = r_{x_i}$.
    Let $\varepsilon = \min\{r_1,\ldots,r_m\}$.  We will show that 
    $\lcrS(S)\geq \frac{\varepsilon}{2}$. 

    Fix $c\in S$ and let $x=x_i$ for some $i$ with $d_Z(c,x_i)<\frac{r_i}{2}$.  Set $\delta = \frac{r_i}{2}$, and note
    $\delta\ge \frac{\varepsilon}{2}$, so it suffices to
     show that $S\cap B_{\delta}(c)$ is convex in $Z$. Observe: 
     \begin{align*}
        S\cap B_{\delta}(c) &= S\cap B_{r_x}(x) \cap  B_{\delta}(c) \\
        &= \bigl( S\cap B_{r_x}(x)\bigr)\cap \bigl(B_{\delta}(c)\cap B_{r_x}(x) \bigr).
    \end{align*}
    We claim that this is an intersection of two convex sets, and hence convex. 
    Indeed, $S\cap B_{r_x}(x)$ is convex by definition. As $B_{r_x}(x)$ contains $B_{\delta}(c)$ and $r_x< \bw{Z}{x}$, we can lift $B_{r_x}(x)$ to the universal cover of $Z$, where balls are convex by the nonpositive curvature condition.  Thus $B_{\delta}(c)\cap B_{r_x}(x) = B_\delta(c)$ is convex, and therefore so is $S\cap B_\delta(c)$.
\end{proof}

\section{Tailoring warped products to bound curvature from above}
Our first goal is to construct a warping function $f$ on an interval $I\subset \R$ such that $f$ vanishes at the left-hand endpoint of $I$,  $f$ makes the metric on the cone agree with the hyperbolic metric on $M$ away from a neighborhood of the cone point, and  the cone $I\times_f N$ is $\CAT(K)$ for some $K<0$, where $N$ is a boundary component of $M$.
We will use a criterion of Alexander and Bishop~\cite{AB} to bound the curvature of our warped product from above.  They work in a more general setting $B\times_f F$ where $B$ need not be an interval, so we simplify their definitions and result here.

\begin{definition}\cite[\S 2.2]{AB}
    Let $\mathcal{F}K$ denote the family of solutions to the differential equation $g''+Kg=0$. An \emph{$\mathcal{F}K$-convex function} $f:I\to \R$ for $I\subseteq \R$ an interval is one whose restriction to every sub-interval satisfies the differential inequality $f''\geq-Kf$ \emph{in the barrier sense}, meaning if $g\in \mathcal{F}K$ coincides with $f$ at the endpoints of a sub-interval, then $f\leq g$. 
\end{definition}
We remark that for $K\le 0$, an $\mathcal{F}K$-convex function is convex in the usual sense.

\begin{theorem}\label{AB:CBA}\cite[Theorem 1.1]{AB}
  Let $I = [t_0,t_1)\subset \R$ and let $F$ be a complete $\CAT(K_F)$ space for some $K_F\in \R$. Fix some $K\leq 0$ and suppose $f:I\to [0,\infty)$ is $\mathcal{F}K$-convex, with $f^{-1}(0) = \{t_0\}$. Suppose further that 
  $f'({t_0}^+)^2 \ge K_F$.\footnote{Here $f'(t_0^+)$ denotes the right-hand derivative at $t_0$.}
    
  Then the warped product $I\times_f F$ is $\CAT(K)$.
\end{theorem}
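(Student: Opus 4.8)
Since $I$ is one-dimensional and $f$ vanishes only at $t_0$, the space $X := I\times_f F$ is \emph{cone-like}: the slice $\{t_0\}\times F$ collapses to a single point $v$, every point $(t,\xi)$ satisfies $d(v,(t,\xi)) = t-t_0$, so $t-t_0$ is literally the radial coordinate centered at $v$, and $X$ deformation retracts to $v$ along radial lines (in particular it is contractible). The plan is to verify the $\CAT(K)$ triangle-comparison inequality in $X$ directly, using the two-dimensional warped products $I\times_f J$ ($J\subseteq\R$ an interval) as intermediaries.

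First I would record two elementary facts pinning down the roles of the hypotheses. A geodesic of $X$ missing $v$ has $F$-coordinate a reparametrized geodesic of $F$; parametrizing that geodesic by arclength identifies the metric behaviour of the original geodesic with that of a geodesic in a surface of revolution $\Sigma = I\times_f J$, whose Gauss curvature is $-f''/f$. Since $f$ is $\mathcal{F}K$-convex, $f''\ge -Kf$, so $-f''/f\le K$ on $(t_0,t_1)$: each such $\Sigma$ has curvature $\le K$. Moreover, since $K\le0$, $\mathcal{F}K$-convexity makes $f$ convex, and with $f\ge0$, $f(t_0)=0$ this forces $f'\ge f'(t_0^+)\ge0$ on $I$; hence $\tfrac{d}{dt}\bigl(|f'|^2+Kf^2\bigr) = 2f'(f''+Kf)\ge0$, so $|f'|^2+Kf^2\ge f'(t_0^+)^2\ge K_F$ throughout $I$. (When $F$ is a Riemannian manifold of sectional curvature $\le K_F$ --- the case needed in this paper, with $F=\partial M$ hyperbolic --- these two inequalities say precisely that the mixed and pure-fiber sectional curvatures of $I\times_f F$, computed from the standard warped-product formulas, are $\le K$; with the apex condition discussed below this makes $I\times_f F$ locally $\CAT(K)$, hence globally $\CAT(K)$ by a Cartan--Hadamard local-to-global argument since $I\times_f F$ is contractible. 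So in the setting of this paper there is a fairly elementary self-contained proof.)

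For a general $\CAT(K_F)$ space $F$ the sectional-curvature computation is unavailable and is replaced by the curve-development technique of Alexander and Bishop. Given a geodesic $\sigma$ of $X$, I would develop it relative to $v$ into the model plane $M_K^2$: this produces a curve $\bar\sigma$ of the same length, at distance $d(v,\sigma(s))$ from a basepoint $\bar v$, which is convex toward $\bar v$ because $\sigma$ is a geodesic. The curvature bound $-f''/f\le K$ controls the radial part of this development, the $\CAT(K_F)$ bound on $F$ together with the fiber inequality $|f'|^2+Kf^2\ge K_F$ controls the angular part, and for curves passing through or close to $v$ it is exactly the boundary hypothesis $f'(t_0^+)^2\ge K_F$ that keeps the development under control: by Berestovskii's cone criterion this is precisely the condition that the tangent cone of $X$ at $v$ --- the Euclidean cone on the $f'(t_0^+)$-rescaling of $F$ --- be $\CAT(0)$. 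Applying the development to the side of a geodesic triangle opposite each of its vertices and feeding the convexity of the developed curves into the usual reduction then gives the $\CAT(K)$ inequality for every geodesic triangle in $X$ (along the way one also verifies $X$ is a geodesic space, using completeness of $F$ and the monotonicity of $f$).

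I expect the technical heart --- and the main obstacle --- to be running this development simultaneously in the radial and fiber directions for a non-manifold $F$, and in particular handling curves that pass through or graze the apex $v$, where the radial convexity coming from $f$ and the angular convexity coming from $F$ must be played off against one another. This is the substance of \cite[Theorem 1.1]{AB}; restricting the base to an interval lightens their bookkeeping considerably but does not remove this core point.
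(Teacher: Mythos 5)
The paper does not prove this statement—it is a direct citation of Alexander--Bishop \cite[Theorem~1.1]{AB}—so there is no internal proof to compare against, and the relevant question is whether your outline is a sound account of why the hypotheses suffice. It is. You correctly identify the three places the hypotheses enter: $\mathcal{F}K$-convexity gives $-f''/f\le K$, which bounds the Gauss curvature of the two-dimensional developments $I\times_f J$; the monotonicity of $|f'|^2+Kf^2$ (following from $f'\ge 0$ and $f''+Kf\ge 0$) together with the initial value $f'(t_0^+)^2\ge K_F$ gives the pure-fiber curvature bound $K_F\le|f'|^2+Kf^2$; and, via Berestovskii's cone criterion, $f'(t_0^+)^2\ge K_F$ is exactly what makes the space of directions at the apex---the $f'(t_0^+)$-rescaling of $F$, which is $\CAT\bigl(K_F/f'(t_0^+)^2\bigr)$---into a $\CAT(1)$ space, so that the tangent cone at $v$ is $\CAT(0)$. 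You are also right that for this paper's application a self-contained Riemannian argument is available, since $F=\partial M$ is a hyperbolic manifold: O'Neill's warped-product curvature formulas give the sectional curvature bound away from the apex, Berestovskii handles the apex, and the local-to-global theorem for $K\le 0$ upgrades to global $\CAT(K)$ using contractibility.

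Two small caveats worth flagging. First, the computation $\tfrac{d}{dt}(|f'|^2+Kf^2)=2f'(f''+Kf)$ presumes $f$ is twice differentiable; $\mathcal{F}K$-convexity is only a barrier-sense condition, so in general one should argue through the barrier definition (for the cone warping functions actually built in Lemma~\ref{function_lemma} this is harmless, since they are smooth on each piece). Second, the local-to-global step needs the warped product to be a geodesic space and to have enough local completeness, which requires a word since $I=[t_0,t_1)$ is half-open; this is exactly the sort of bookkeeping that \cite{AB} handle and that your sketch acknowledges but does not carry out. Since you explicitly defer the development argument near the apex and the non-Riemannian fiber case to \cite{AB}, your proposal functions as a correct and informative gloss on the cited theorem rather than an independent proof, which is an appropriate treatment of a background result the paper itself only cites.
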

\begin{proof}
  Notice first that since $f$ is convex, non-negative and satisfies $f^{-1}(0) = \{t_0\}$, it is strictly increasing on $I$.  For $t\in (t_0,t_1)$, let
  \[ L_t = [t_0,t]\times_{f|[t_0,t]} F. \]
  Lemma~\ref{lem:conegeodesics}.\eqref{itm:depthconvex} implies that $L_t$ is convex in $I\times_f F$.  
  Moreover, \cite[Theorem 1.1]{AB} implies that $L_t$ is $\CAT(K)$.  Any geodesic triangle in $I\times_f F$ lies in $L_t$ for some $t<t_1$, so the entire warped product is $\CAT(K)$.
\end{proof}

The following lemma gives us sufficient conditions for such an $f$ to be $\mathcal{F}K$-convex.

\begin{lemma}\label{lem:aepos}
    Suppose $K < 0$ and $I\subseteq \R$ is an interval.  Let $f:I\to \R$ be a function with first derivative defined everywhere and second derivative defined almost everywhere.  If $f$ satisfies the differential inequality $$f''+K f \geq 0$$ wherever $f''$ is defined, then $f$ is $\mathcal{F}K$-convex. 
    \label{diff_ineq_ae_gives_FK}
\end{lemma}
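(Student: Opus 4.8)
The plan is to argue directly from the definition of $\mathcal{F}K$-convexity: $f$ is $\mathcal{F}K$-convex precisely when, for every closed sub-interval $[a,b]\subseteq I$, the restriction of $f$ lies below the function $g\in\mathcal{F}K$ that agrees with $f$ at $a$ and at $b$. Because $K<0$, the space $\mathcal{F}K$ is spanned by $e^{\pm\sqrt{-K}\,t}$, which has no conjugate points, so such a $g$ exists (and is unique) on every sub-interval. Fix $[a,b]$ and the associated $g$ and set $h=f-g$. Then $h(a)=h(b)=0$, the first derivative $h'$ is defined everywhere, the second derivative $h''=f''+Kg$ is defined almost everywhere, and wherever $h''$ exists we have $h''+Kh=f''+Kf\ge 0$. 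It suffices to prove $h\le 0$ on $[a,b]$.

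Suppose not, so that $h>0$ somewhere on $(a,b)$. Since $h$ is continuous and vanishes at the endpoints, the open set $\{h>0\}$ has a connected component $(c,d)$ with $[c,d]\subseteq[a,b]$, $h(c)=h(d)=0$, and $h>0$ on $(c,d)$. On the interval $[c,d]$ we have $h\ge 0$; here is the one place the hypothesis $K<0$ is genuinely used, giving $h''\ge -Kh\ge 0$ almost everywhere on $[c,d]$. The crux of the proof is to upgrade this to the assertion that $h$ is \emph{convex} on $[c,d]$. Granting that, a convex function that vanishes at both endpoints of $[c,d]$ is non-positive there, contradicting $h>0$ on $(c,d)$. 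Hence $h\le 0$ on $[a,b]$, and since $[a,b]$ and $g$ were arbitrary, $f$ is $\mathcal{F}K$-convex.

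I expect the step ``$h$ is $C^1$ on $[c,d]$ with $h''\ge 0$ almost everywhere, therefore $h$ is convex'' to be the main obstacle. I would establish it by showing that $h'$ is non-decreasing on $[c,d]$: on each open sub-interval on which $h''$ is defined and $\ge 0$ this is immediate, and since $h'$ is continuous (being the derivative of a $C^1$ function) these pieces glue together across the exceptional set, at least when that set is small enough — e.g. finite, or more generally when $h'$ is absolutely continuous. For the warping functions used here this is automatic, since they are $C^1$ and piecewise $C^2$, so ``almost everywhere'' means ``away from finitely many points'' and the gluing is trivial; a function with non-decreasing derivative is convex. It is worth noting, though, that with no control on a possible singular part of $f'$ the implication can fail, so the cleanest form of the hypothesis is that $f'$ be absolutely continuous with $f''+Kf\ge 0$ almost everywhere — a condition comfortably met by the functions to which the lemma is applied.
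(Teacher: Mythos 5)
The paper states Lemma~\ref{lem:aepos} without proof, so there is no argument of the authors' to compare against; your barrier-comparison argument is the natural one and, in the setting where the lemma is actually invoked, it is correct and complete. The reduction to $h=f-g$ (with $h''+Kh=f''+Kf\ge 0$ wherever $f''$ exists, since $g''=-Kg$), the passage to a component $(c,d)$ of $\{h>0\}$ with $h(c)=h(d)=0$, the use of $K<0$ to get $h''\ge -Kh\ge 0$ a.e.\ on $[c,d]$, and the endgame (a convex function vanishing at both endpoints is nonpositive) are all fine. One small remark: the lemma's hypotheses do not actually give continuity of $f'$ (a derivative need only have the Darboux property), so you should not lean on ``$h'$ is continuous''; but when the exceptional set is finite you do not need it, since everywhere-differentiability plus the mean value theorem gives $h'(x)\le h'(p)\le h'(y)$ across an exceptional point $p$, so monotonicity of $h'$ glues and $h$ is convex on $[c,d]$.

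Your caveat is not mere caution: with only ``$f'$ defined everywhere and $f''$ defined almost everywhere'' the statement is false in general. For fixed $K<0$ put $k=-K$, choose $L>0$ with $kL^2\le 1$, let $C$ be the Cantor function scaled to $[0,L]$ (so $C'=0$ a.e.), and set $f'(x)=1+\beta x-\alpha C(x)$ and $f(x)=\int_0^x f'(t)\,dt$ with $\beta=2kL$ and $\alpha=2+\beta L$, so that $f(0)=f(L)=0$. Then $f$ is $C^1$, $f''=\beta$ a.e., and $\beta\ge k\bigl(L+\beta L^2/2\bigr)\ge k\max f$, so $f''+Kf\ge 0$ wherever $f''$ exists; yet on $[0,x_1]$, where $x_1$ is the first return of $f$ to $0$, $f$ is positive and agrees at the endpoints with $g\equiv 0\in\mathcal{F}K$, contradicting $\mathcal{F}K$-convexity. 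So an additional regularity hypothesis is needed, e.g.\ that $f'$ is absolutely continuous (equivalently, that the inequality holds distributionally) or that $f''$ exists off a finite set. This costs nothing here: the cone warping functions produced by Lemma~\ref{function_lemma} are $C^1$ and smooth away from the single point $t=b$, which is exactly the case your proof handles.
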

\begin{proof}
  Let $J\subseteq I$ be a closed interval, and let $g\from J\to \R$ be the function which agrees with $f$ on the endpoints of $J$ and satisfies the differential equation $g'' + K g = 0$.  Let $h = f-g$; we must show that $h\le 0$ on $J$.

  Suppose by contradiction that $h(t_0)>0$ for some $t_0\in J$.
  Shrinking $J$ if necessary, we may suppose that $h$ is positive everywhere on the interior of $J$ and vanishes at the endpoints of $J$.
  Almost everywhere on $J$, we have
  \begin{align*}
    h''(t) & = f''(t) - g''(t)\\
           & \ge -K f(t) + K g(t) = -K h(t)\\
    & >0.
  \end{align*}
  Let $\ell$ be the left-hand endpoint of $J$.  Since $h(t)>0$ for $t$ in the interior of $J$, we must have $h'(\ell) \ge 0$.  But then for any $t$ in the interior of $J$, we have $h'(t) = h'(\ell) + \int_\ell^t h''(y)\ dy>0$.  In particular $h$ is strictly increasing on $J$.  This contradicts the fact that $h$ vanishes at the right-hand endpoint of $J$.
\end{proof}

Here is a lemma that will help us construct a warping function that we can use when applying Theorem~\ref{AB:CBA} of Alexander-Bishop.

\begin{lemma} \label{function_lemma}
    For all $b>0$ and $0<\delta<\sinh(b)$, there exists $t_0<b$, $K<0$, and a function $f\from[t_0, \infty) \to \R$ such that 
    \begin{enumerate}
        \item $f(t_0)=0$ and $f(t) = \cosh(t)$ for $t\ge b$, \label{itemf}
        \item $f'({t_0}^+)=\delta$, and $f'$ is continuous and positive everywhere,\label{itemf'}
        \item $f$ is $\mathcal{F}K$-convex. \label{itemFK}
    \end{enumerate}
\end{lemma}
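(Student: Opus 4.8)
The plan is to build $f$ piecewise: on $[b,\infty)$ we are forced to take $f(t)=\cosh(t)$, which is $\mathcal FK$-convex for any $K\le 0$ since $(\cosh)''-\cosh=0\ge -K\cosh$; so all the work is on an interval $[t_0,b]$ where we must interpolate between the boundary condition $f(t_0)=0$, $f'(t_0^+)=\delta$ and the matching conditions $f(b)=\cosh b$, $f'(b)=\sinh b$ at the right end, keeping $f'$ continuous and positive and maintaining the differential inequality $f''+Kf\ge 0$ (which by Lemma~\ref{lem:aepos} suffices for $\mathcal FK$-convexity). The natural first guess is a multiple of $\sinh$: the function $t\mapsto \frac{\delta}{\cosh(t_0)}\sinh(t)$ — wait, rather we want $f(t_0)=0$, so try $f(t)=A\sinh(t-t_0)$, which automatically vanishes at $t_0$ and has $f'(t_0^+)=A$, so set $A=\delta$. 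Then $f(t)=\delta\sinh(t-t_0)$ satisfies $f''-f=0$, hence $f''+Kf = f''-f+(K+1)f = (K+1)\delta\sinh(t-t_0)\ge 0$ provided $K\ge -1$; so this piece is $\mathcal FK$-convex for any $K\in[-1,0)$.

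The remaining issue is that the two pieces — $\delta\sinh(t-t_0)$ on $[t_0,b]$ and $\cosh(t)$ on $[b,\infty)$ — will not in general match to first order at $t=b$. First I would choose $t_0$ so that the function \emph{values} match there, i.e. $\delta\sinh(b-t_0)=\cosh(b)$; since $\delta<\sinh(b)<\cosh(b)$ and $u\mapsto \delta\sinh(u)$ runs over $[0,\infty)$ as $u$ runs over $[0,\infty)$, there is a unique such $t_0$, and from $\sinh(b-t_0)=\cosh(b)/\delta>\cosh(b)/\sinh(b)>1$ one checks $b-t_0>0$, so indeed $t_0<b$; one should also confirm $t_0$ can be taken positive, or simply allow $t_0\in\R$ as the statement does. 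At $t=b$ the left derivative is $\delta\cosh(b-t_0)=\delta\sqrt{1+\cosh^2(b)/\delta^2}=\sqrt{\delta^2+\cosh^2 b}$ while the right derivative is $\sinh(b)$; since $\delta<\sinh b$ gives $\delta^2+\cosh^2 b<\sinh^2 b+\cosh^2 b$... that's larger, not smaller — so in fact the left derivative exceeds $\sinh b$, meaning $f$ has a \emph{concave} corner at $b$, which is the wrong direction for $\mathcal FK$-convexity.

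The main obstacle, then, is repairing this corner without destroying the inequality. The fix I would use is to not demand that the $\sinh$-piece run all the way to $b$: instead pick an intermediate point $t_1\in(t_0,b)$ and, on $[t_1,b]$, replace the $\sinh$ arc by a smooth function that bends \emph{convexly} from $(t_1,\delta\sinh(t_1-t_0))$ up to $(b,\cosh b)$ with matching first derivatives $\delta\cosh(t_1-t_0)$ at $t_1$ and $\sinh b$ at $b$ — possible precisely because we can arrange the left-hand slope at $b$ to be \emph{less} than $\sinh b$ by stopping the $\sinh$-arc early and choosing $t_0,t_1$ appropriately (equivalently, recompute $t_0$ from a value-matching condition at $t_1$ rather than at $b$, with $t_1$ a free parameter to be tuned). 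Concretely, on $[t_1,b]$ one can take an explicit convex smoothing — e.g. add a small nonnegative bump to $f''$, or use a function of the form $\alpha\sinh(t)+\beta\cosh(t)$ on a sub-subinterval and mollify — and since $K<0$ can be chosen as close to $0$ as we like, the required inequality $f''\ge -Kf$ only asks that $f''$ stay above a small negative multiple of $f$; any smoothing with $f''\ge 0$ on the transition region works. I would carry out the construction in the order: (i) record that $\cosh$ works on $[b,\infty)$; (ii) choose the free parameters $t_0<t_1<b$ and verify the value/derivative constraints are simultaneously solvable using $\delta<\sinh b$; (iii) define the $\sinh$-multiple on $[t_0,t_1]$ and check $\mathcal FK$-convexity for $K\in[-1,0)$; (iv) define and check the convex transition on $[t_1,b]$; (v) assemble and observe $f'$ is continuous and positive throughout (positivity of $f'$ on $[t_0,t_1]$ is automatic from $\delta\cosh>0$, on the transition from convexity plus positive endpoint slopes, and on $[b,\infty)$ from $\sinh t>0$), and that the piecewise bound gives the global inequality wherever $f''$ is defined, so Lemma~\ref{lem:aepos} applies. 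I expect step (iv) — pinning down an explicit transition with all four boundary conditions and $f''\ge 0$ — to be the only delicate point, and the slack afforded by letting $K\to 0^-$ is what makes it go through.
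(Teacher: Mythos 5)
Your overall strategy—take $\cosh$ on $[b,\infty)$, build a convex piece on $[t_0,b]$ matching to first order at $b$, and use the freedom in $K$ to absorb the inequality—is correct and is the same idea as the paper. The paper's construction is more direct: it does not use a $\sinh$-arc at all, but instead picks the tangent line $l_0$ to $\cosh$ at $b$ and a line $l_1$ of slope $\delta$ through $(t_0,0)$, chooses $t_0$ so these two lines cross strictly between $t_0$ and $b$ (possible precisely because $\delta<\sinh b$), and then cites a lemma of Fujiwara--Manning (\cite[Lemma 4.13]{FM}) producing a smooth function $g$ on $[t_0,b]$ tangent to both lines at the endpoints with $g''\ge\mu$ for some $\mu>0$. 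Your two-piece version ($\sinh$-arc plus convex transition) can also be made to work—the constraint-solvability you flag in step (ii) does go through—but it adds a step without simplifying the transition construction, which is where the real content lies in either approach.

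There is, however, a genuine error in your final paragraph that undermines the transition-piece argument as you stated it. You claim that since $K$ can be taken close to $0$, "the required inequality $f''\ge -Kf$ only asks that $f''$ stay above a small \emph{negative} multiple of $f$; any smoothing with $f''\ge 0$ on the transition region works." The sign is backwards: for $K<0$ and $f>0$, the quantity $-Kf=|K|f$ is a small \emph{positive} multiple of $f$, so the inequality demands $f''\ge |K|f>0$, which is strictly stronger than $f''\ge 0$. A convex piecewise-linear interpolation, for instance, has $f''=0$ almost everywhere yet violates $f''\ge |K|f$ for every $K<0$. What you actually need—and what the cited lemma of Fujiwara--Manning supplies—is a uniform lower bound $f''\ge\mu>0$ on the compact transition interval; combined with $f\le\cosh(b)$ there, this gives $f''\ge\frac{\mu}{\cosh(b)}f$, so $K=\max\{-1,-\mu/\cosh(b)\}$ works. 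Your "add a small nonnegative bump to $f''$" idea can be arranged to do this, but the lower bound must be strictly positive, and that is the point your write-up obscures.
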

\begin{definition}\label{def:conewarp}
        Call a function satisfying the conclusions of Lemma~\ref{function_lemma} a \emph{cone warping function with parameters $(b,\delta,K)$.}
\end{definition}

\begin{proof}
    Fix $b>0$ and $\delta\in (0,\sinh(b))$.
    Let $l_0$ be the tangent line to $\cosh(t)$ at $t=b$, i.e. 
    \[ l_0(t) = \sinh(b)(t-b) + \cosh(b).\]
    Choose $t_0<b$ so that the line of slope $\delta$ through the point $(t_0,0)$ intersects $l_0$ somewhere on the interval $(t_0,b)$.  Let $l_1$ be this second line.  
    By \cite[Lemma~4.13]{FM}, there is a smooth function $g$ on $[t_0,b]$ whose graph is tangent to $l_0$ at $(b,\cosh(b))$ and to $l_1$ at $(t_0,0)$, and whose second derivative is bounded below everywhere.  In particular $g$ has the following properties, for some $\mu>0$.
    \begin{itemize}
        \item $g(t_0) = 0$ and $g(b) = \cosh(b)$;
        \item $g'(t_0) = \delta$ and $g'(b) = \sinh(b)$; 
        \item $g''(t)\ge\mu$ for all $t\in [t_0,b]$
    \end{itemize}

    Define our desired function $f$ as follows:
\begin{equation*}\label{eq:fdef}
f (t) := \begin{cases} 
      g(t) & t\in [t_0, b] \\
      \cosh(t) & t \ge b.  
   \end{cases}
\end{equation*}
\begin{figure}[htbp]
    \centering
    \includegraphics[width=\linewidth]{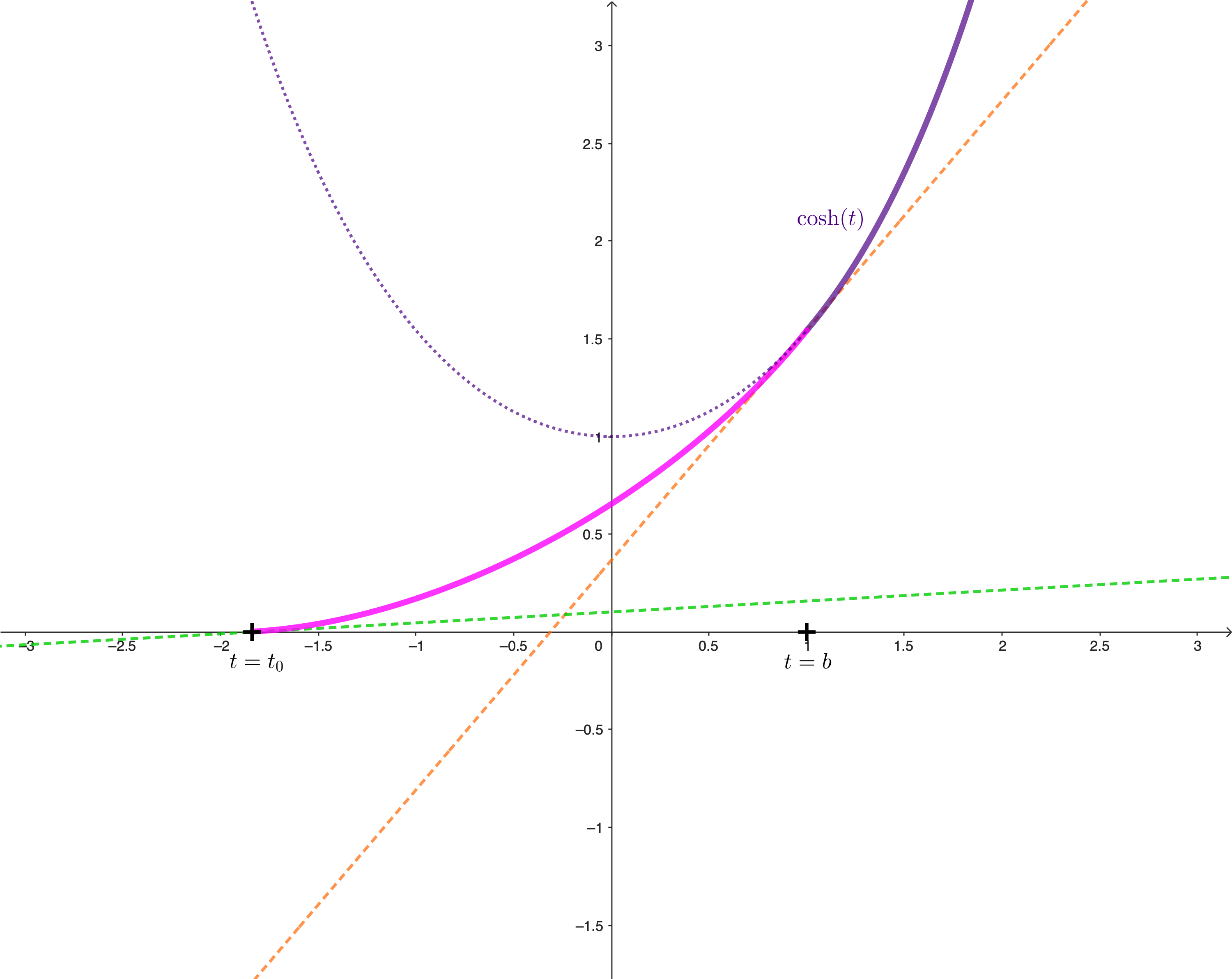}
    \caption{An example $f(t)$ in pink and purple solid lines with tangent lines in dashed green and orange.}
    \label{fig:warping_function}
\end{figure}
See Figure \ref{fig:warping_function} for an example graph of $f$.
Note that items (\ref{itemf}) and (\ref{itemf'}) have  been satisfied by construction.  It remains to show that $f$ is $\mathcal{F}K$-convex for some $K<0$. By Lemma~\ref{diff_ineq_ae_gives_FK}, it is enough to find $K<0$ so that $f''(t)+Kf(t)\geq 0$ for all $t\in (0,b)\cup(b,\infty)$.
For $t\in (b, \infty)$, $f''(t)=f(t)$, so any $K\ge -1$ works for this interval.
For $t\in (t_0, b)$, we have $f(t)<\cosh(b)$ and $f''\ge \mu$, so 
\[ f''(t) > \frac{\mu}{\cosh(b)} f(t).\]
Taking $K = \max\{-1, - \frac{\mu}{\cosh(b)}\}$, we have $f''(t)+Kf(t)\ge 0$ for all $t\in (t_0,b)\cup (b,\infty)$.  Lemma~\ref{lem:aepos} gives us conclusion~\eqref{itemFK}.
\end{proof}

To apply Alexander--Bishop's Theorem~\ref{AB:CBA} with a component of $\partial M$ as a fiber, we need to find a $\kappa>0$ so that $\partial M$ is $\CAT(\kappa)$.  The following finds such a $\kappa$ in terms of the injectivity radius.

\begin{lemma}\label{lem:reim_bdd_gives_cat}
Let $N$ be a complete connected non-positively curved manifold, and let \[\kappa = \left(\frac{\pi}{\injrad(N)}\right)^2.\]  Then $N$ is (globally) $\CAT(\kappa)$.
\end{lemma}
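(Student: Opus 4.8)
The plan is to reduce the statement to a standard comparison-geometry fact about the injectivity radius of a non-positively curved manifold. The key point is that the hypothesis $\injrad(N)\ge r_0 := \injrad(N)$ forces balls of radius up to $r_0$ to be convex and embedded, and a ball of radius less than $\pi/\sqrt{\kappa}$ in a non-positively curved manifold is automatically $\CAT(\kappa)$ once it is convex, because its curvature is bounded above by $0\le \kappa$. More precisely, I would first recall (or cite, e.g.\ from Bridson--Haefliger or do Carmo) that a geodesic metric space all of whose balls of radius $<\pi/(2\sqrt{\kappa})$ are convex and $\CAT(\kappa)$, and which is itself simply connected, is globally $\CAT(\kappa)$ by the Cartan--Hadamard-type gluing theorem for $\CAT(\kappa)$ spaces (the ``$\pi/\sqrt{\kappa}$ diameter'' version). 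Then pass to $N$ itself.

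First I would pass to the universal cover $\widetilde N$, which is a complete simply connected non-positively curved manifold, hence $\CAT(0)$ by the Cartan--Hadamard theorem, and in particular uniquely geodesic; moreover since $N$ has $\injrad(N)\ge r_0$ with $r_0 = \pi/\sqrt{\kappa}$, the covering projection $\widetilde N\to N$ restricted to any ball of radius $<r_0$ in $N$ is an isometry onto a convex subset of $\widetilde N$. Next, since $\widetilde N$ is $\CAT(0)$ it is in particular $\CAT(\kappa)$ for every $\kappa>0$ (a $\CAT(0)$ space satisfies the $\CAT(\kappa)$ comparison inequality for all $\kappa\ge 0$), so every ball in $\widetilde N$ — and hence every metric ball of radius $<r_0$ in $N$ — is $\CAT(\kappa)$. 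Then I would invoke the local-to-global theorem for $\CAT(\kappa)$ spaces with $\kappa>0$: if $N$ is a complete geodesic space in which every point has a $\CAT(\kappa)$ neighborhood, and $N$ contains no closed geodesic of length $<2\pi/\sqrt{\kappa}$, then $N$ is $\CAT(\kappa)$ (see \cite[Theorem II.4.1, Exercise II.4.2]{BH}). The no-short-closed-geodesic hypothesis is exactly where $\kappa = (\pi/\injrad(N))^2$ enters: a closed geodesic of length $\ell$ would produce an essential loop of length $\ell$, and if $\ell < 2\pi/\sqrt{\kappa} = 2\injrad(N)$, lifting it to $\widetilde N$ and using that $\widetilde N$ is $\CAT(0)$ (so has no closed geodesics at all, hence the lift is not closed but the loop is a local geodesic loop) contradicts the definition of injectivity radius — indeed a local geodesic loop of length $\ell$ based at $x$ forces $\injrad(x)\le \ell/2 < \injrad(N)$.

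The main obstacle, and the step requiring the most care, is the precise statement and citation of the local-to-global $\CAT(\kappa)$ theorem for $\kappa>0$: unlike the $\kappa\le 0$ case there is the subtlety about closed geodesics of length $<2\pi/\sqrt{\kappa}$, and one must also be careful that ``every point has a $\CAT(\kappa)$ neighborhood'' is genuinely witnessed by the convex embedded balls coming from the injectivity radius bound, including the edge case where $\injrad(N)=\infty$ (then $\kappa=0$ and $N$ is $\CAT(0)$ directly by Cartan--Hadamard, with no closed-geodesic issue). A secondary point to handle cleanly is that $N$ need not be compact, so one should make sure the local-to-global theorem is being applied in a form valid for complete, not merely compact, spaces; Bridson--Haefliger's version suffices. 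Once these citations are pinned down, the argument is short: convexity of small balls plus $\CAT(0)$ of the universal cover gives the local $\CAT(\kappa)$ condition, the injectivity radius bound rules out short closed geodesics, and the local-to-global theorem finishes it.
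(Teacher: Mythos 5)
Your approach is correct in substance but genuinely different from the paper's, and there is a citation issue worth flagging. The paper does not invoke any local-to-global theorem for positive $\kappa$. Instead it verifies the $\CAT(\kappa)$ triangle comparison directly: given a geodesic triangle $\Delta \subset N$ of perimeter $<2\pi/\sqrt{\kappa}$, each side has length $<\injrad(N)$, so any two points $x,y$ on sides adjacent to a vertex $a$ lie in a ball $B$ about $a$ of radius $<\injrad(N)$; this ball lifts isometrically to a \emph{convex} ball in the universal cover $\cover N$, which is $\CAT(0)\subset\CAT(\kappa)$, and one reads off the comparison inequality for $d_N(x,y)$ there. (Implicit in that argument is that the perimeter bound $<2\injrad(N)$ forces the whole triangle to lift to $\cover N$ with the same side lengths — exactly because a homotopically essential loop must have length $\ge 2\injrad(N)$ in a non-positively curved manifold. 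That is the same systole-type fact you use to rule out short closed geodesics, just deployed at a different point of the argument.) The paper's proof is thus more self-contained; yours is more modular but leans on a nontrivial black box.

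That black box is the real concern. The theorem you want — a complete (locally compact) geodesic space that is locally $\CAT(\kappa)$, $\kappa>0$, and contains no isometrically embedded circle of length $<2\pi/\sqrt{\kappa}$, is $\CAT(\kappa)$ — is due to Bowditch (\emph{Notes on locally CAT(1) spaces}, in Geometric Group Theory, de Gruyter, 1995). Your citation to \cite[Theorem II.4.1, Exercise II.4.2]{BH} does not support this step: II.4.1 is the Cartan--Hadamard theorem and is stated only for $\kappa\le 0$. Also be careful that Bowditch's hypothesis concerns isometrically embedded circles rather than all closed geodesics; your version is formally stronger. This causes no gap here, since any isometrically embedded circle is a closed geodesic and your injectivity radius argument excludes all closed geodesics of length $<2\injrad(N)$, but the statement should be quoted as Bowditch gives it. With that citation repaired, your argument goes through; your handling of the $\injrad(N)=\infty$ edge case and of the step from ``short closed geodesic'' to ``$\injrad(x)\le \ell/2$'' (using that $\cover N$ is $\CAT(0)$, so the geodesic lifts to a nontrivial translate) is correct. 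Note also that your opening paragraph's appeal to a Cartan--Hadamard-type theorem for simply connected spaces does not apply to $N$ directly since $N$ need not be simply connected; your second paragraph correctly replaces it, so you should simply drop the first plan.
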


\begin{proof}
    Recall that the $\CAT(\kappa)$ inequality concerns only geodesic triangles with perimeter less than $2D_\kappa$, where $D_\kappa = \frac{\pi}{\sqrt{\kappa}}$ is the diameter of the corresponding model space.  
    Let $\Delta$ be a geodesic triangle in $N$ with 
    perimeter less than $2D_\kappa$ and 
    vertices labeled $a,b,c$.  
    Because of the triangle inequality, the length of each side of $\Delta$ is less than $\frac{\pi}{\sqrt{\kappa}}$. 
    Let $x$ and $y$ be two points on $\Delta$ not on the same side.  Relabeling vertices if necessary, we may assume $x \in [a,b]$ and $y\in [a,c]$.  Since the sides $[a,c]$ and $[a,b]$ are of length less than the injectivity radius of $N$, the points $x$ and $y$ are in an open ball $B$ around $a$ of radius less than $\injrad(N)$. 
    We have that \begin{equation}\label{eq:boundbyball}
    d_N(x,y) \leq d_B(x,y),
    \end{equation}
    and 
    we can lift $B$ to a ball in the universal cover $\cover{N}$. Since balls in $\cover{N}$ are convex, it follows that 
    \[d_B(x,y)=d_{\cover{N}}(x,y).\]
    Since the sectional curvature of $N$ is bounded above by $0$, so too is the sectional curvature of $\cover{N}$. Thus $\cover{N}$ is $\CAT(0)$ (cf. Theorem II.1A.6 in \cite{BH}), which implies it is $\CAT(\kappa)$ (see \cite[Theorem II.1.12]{BH}).  The ball $B$ lifts isometrically to $\cover{N}$, so $d_B(x,y)$ is bounded above by the corresponding distance in the $\CAT(\kappa)$ comparison triangle.  By~\eqref{eq:boundbyball} this also gives an upper bound for $d_N(x,y)$, verifying the $\CAT(\kappa)$ inequality.
\end{proof}

\section{Proof of Theorem \ref{maintheorem}}\label{sec:maintheorem}
  We assume for this section and the next that the hyperbolic manifold $M$ with totally geodesic boundary satisfies the hypotheses of Theorem~\ref{maintheorem}.  Namely, there are constants $b,c$ satisfying
  \begin{equation}\tag{A1} 0 < b < \bw{M}{\partial M},
    \end{equation}
  and
  \begin{equation}\tag{A2}  \injrad(\partial M)\ge c  > \pi/\sinh(b).
    \end{equation}
    For the rest of the section we fix these constants, and let 
         $\delta = \frac{\pi}{c}$.
    Lemma~\ref{function_lemma} tells us there are constants $K<0$, $t_0<b$ and
    and a cone warping function $f\from [t_0,\infty)\to \R$ with parameters $(b,\delta,K)$.  We also fix these numbers $K,t_0$ and function $f$.
  
  \begin{lemma}\label{lem:conesCATK}
    Let $N$ be a boundary component of $M$.  The warped cone
     \[ C_f(N) = [t_0, \bw{M}{\partial M}) \times_{f} N \]
    is $\CAT(K)$.\footnote{We use the notation $C_f(N)$ to distinguish from $C(N) = N\times[0,1]/(N\times\{0\})$, the topological cone.} 
    Moreover the subset $[b,\bw{M}{\partial M})\times_f N$
    is (Riemannian) isometric to $N_{\bw{M}{\partial M}}(N)\smallsetminus N_b(N)$.  
    \end{lemma}
  \begin{proof}
    Assumption~\eqref{eq:injradM} together with Lemma~\ref{lem:reim_bdd_gives_cat}  implies that  $N$ is $\CAT(K_F)$ where
    \[ K_F = \left(\frac{\pi}{c}\right)^2 =\delta^2.\]
    Item~\eqref{itemf'} of Lemma~\ref{function_lemma} implies our cone warping function $f$ satisfies $f'({t_0}^+) = \delta$.  Item~\eqref{itemFK} implies that $f$ is $\mathcal{F}K$--convex, so the Alexander--Bishop theorem~\ref{AB:CBA} implies that the cone $C_f(N)$ is $\CAT(K)$.  

    By item~\eqref{itemf}, $f(t) = \cosh(t)$ for all $t\in [b,\bw{M}{\partial M})$.  As noted in Remark~\ref{rem:cosh}, the $\bw{M}{\partial M}$--neighborhood of $\partial M$ is a warped product with warping function $\cosh(t)$.  In particular,
    the subset
    \[ [b, \bw{M}{\partial M})\times_f N \subset C_f(N) \]
    is Riemannian isometric to 
    \[ N_{\bw{M}{\partial M}}(N) \smallsetminus N_b(N).\]
    See Figure~\ref{fig:cone1} 
    \begin{figure}[htbp]
    \centering \includegraphics[width=\linewidth]{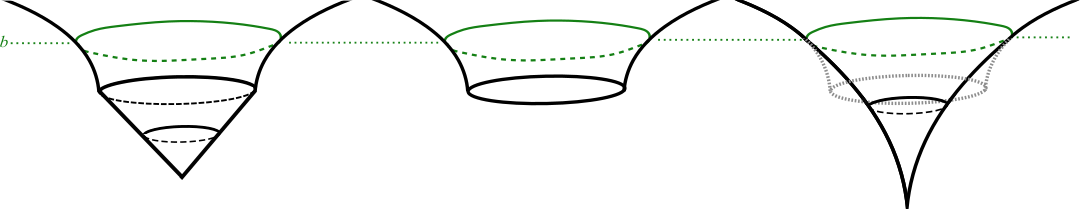}
    \caption{In the middle is a collar neighborhood of width less than $\bw{M}{\partial M}$ of a component $N$ of $\partial M$. $\widehat{M}$ is on the left.   On the right is the corresponding cone in $X$, with the removed portion of $M$ in dashed grey.}
        \label{fig:cone1}
\end{figure}
for a schematic.
  \end{proof}
    We will use these warped cones to describe a geodesic space $X$ homeomorphic to $\coneoff{M}$.
    Let $\mathcal{N}$ denote the set of boundary components of $M$.  For each $N\in \mathcal{N}$ there is a normal exponential map 
    \[ E_N\from N\times [0,\bw{M}{\partial M})\to M, \]
    so that, for fixed $p$,  the map
    $t\mapsto E_N(p,t)$ gives a unit speed geodesic in $M$, orthogonal to $N$ and starting at $p$.
    \begin{definition}[Our metric model for $\coneoff{M}$]\label{def:X}
    Let $\Mdel = M\smallsetminus N_b(\partial M)$.
    We define
    \begin{equation}
      \label{eq:Xdef}\tag{$\heartsuit$}
      X = \Mdel \sqcup \left(\bigsqcup\{C_f(N) \mid N\in\mathcal{N}\}\right)/ \sim
    \end{equation}
    where $\sim$ is the equivalence relation generated by the gluings
    \[ E_N(p,t) \sim (t,p)\in C_f(N),\quad N\in \mathcal{N},\ p\in N,\ t\in [b, \bw{M}{\partial M}) .\]
    The metrics on the pieces $C_f(N)$ and $\Mdel$ together induce a path metric on $X$.
    \end{definition}
    \begin{lemma}\label{lem:Xnegcurved}
        The path metric space $X$ defined by~\eqref{eq:Xdef} is negatively curved.
    \end{lemma}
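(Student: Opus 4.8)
The plan is to show that $X$ is locally $\CAT(K)$ for the $K<0$ fixed in this section, and then invoke the fact that a complete locally $\CAT(K)$ space with $K<0$ is negatively curved. Since $X$ is assembled by gluing the warped cones $C_f(N)$ to the truncated manifold $\Mdel$ along the boundary hypersurfaces $\partial N_b(\partial M)$, the curvature bound is clear away from the gluing locus: in the interior of $\Mdel$ the metric is hyperbolic, hence $\CAT(-1)\subseteq\CAT(K)$ locally, and in the interior of each $C_f(N)$ Lemma~\ref{lem:conesCATK} gives $\CAT(K)$. So the only points requiring attention are those lying on the gluing hypersurfaces.

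At a gluing point, the key observation is that the gluing is \emph{smooth}, not merely metric: by item~\eqref{itemf} of Lemma~\ref{function_lemma}, $f(t)=\cosh(t)$ on $[b,\bw{M}{\partial M})$, and by Remark~\ref{rem:cosh} the region $[b,\bw{M}{\partial M})\times_f N$ of $C_f(N)$ is \emph{Riemannian} isometric to the collar region $N_{\bw{M}{\partial M}}(N)\smallsetminus N_b(N)$ of $\Mdel$ via the normal exponential map (this is exactly the ``Moreover'' clause of Lemma~\ref{lem:conesCATK}). Thus $X$ carries a genuine smooth Riemannian metric in a neighborhood of the gluing locus, agreeing with the hyperbolic metric on the $\Mdel$ side and with the warped-product metric on the cone side. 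In particular a neighborhood of a gluing point is isometric to an open subset of $C_f(N)$, which is $\CAT(K)$ by Lemma~\ref{lem:conesCATK}; alternatively one can note the metric near such a point is smooth with sectional curvatures $\le -1 < K$. Either way, every point of $X$ has a $\CAT(K)$ neighborhood.

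I would then conclude: $X$ is a complete geodesic space (it is a finite gluing of complete spaces along closed subsets, and each piece is complete — $\Mdel$ is compact, each $C_f(N)$ is complete since $f$ extends to $[t_0,\infty)$ and only the interval up to $\bw{M}{\partial M}$ is used, which one should check is genuinely complete as a warped product, or else replace $\bw{M}{\partial M}$ by a slightly smaller radius) and locally $\CAT(K)$, so by the Cartan--Hadamard theorem for $\CAT(K)$ spaces (\cite[II.4.1]{BH}, or the version for $\CAT(0)$) combined with the local characterization of curvature bounds, $X$ is globally $\CAT(K)$ after passing to the universal cover; being locally $\CAT(K)$ with $K<0$ is precisely what ``negatively curved'' means here. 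The main obstacle I anticipate is purely bookkeeping at the gluing hypersurface: verifying carefully that the two warped-product descriptions of the collar match up as Riemannian metrics (orientations of the normal coordinate, the identification $\pi_t$ of Definition~\ref{def:proj}, and continuity of $f'$ across $t=b$ so that the glued metric is $C^1$ — indeed $C^\infty$, since $g$ was built tangent to $\cosh$ at $b$). Once that smooth matching is in hand, no comparison-geometry subtlety remains, since the gluing theorem for $\CAT(K)$ spaces is not even needed — the metric is locally smooth Riemannian with curvature $\le K$ everywhere, or locally isometric to a piece of an already-verified $\CAT(K)$ space.
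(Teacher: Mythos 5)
Your proof is correct and follows essentially the same route as the paper: the key point in both is that $f(t)=\cosh(t)$ on $[b,\bw{M}{\partial M})$ makes the gluings genuine Riemannian isometries, so every point of $X$ has a neighborhood lying in an open set that is either locally $\CAT(-1)$ (interior of $\Mdel$) or $\CAT(K)$ (a cone $C_f(N)$), and this is exactly what ``negatively curved'' means. The paper simply states this in two sentences; your excursion through completeness and Cartan--Hadamard is unnecessary, since no global $\CAT(K)$ claim is being made here.
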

    \begin{proof}
    Recall from Lemma~\ref{lem:conesCATK} that for points $(t,p)$ such that $t \in [b,\bw{M}{\partial M}$, the gluings are by Riemannian isometries.  Moreover each piece gives an open set in $X$ which is either locally $\CAT(-1)$ (in the case of $\Mdel$) or $\CAT(K)$ for some $K<0$ (in the case of the $C_f(N)$ pieces).  In particular the space $X$ is negatively curved.
    \end{proof}
    Recall that the space $\coneoff{M}$ was defined in Definition~\ref{def:coneoff} as a quotient of $M\sqcup (\partial M\times [0,1])$ where $\partial M\times\{1\}$ is identified with $\partial M\subset M$, and each component of $\partial M\times\{0\}$ is collapsed to a point.  The space $X$ is defined in Definition~\ref{def:X}.
    It will be convenient to refer to points in some $C_f(N)$ by their coordinates: 
    \begin{notation}\label{not:coords}
     For $N$ a boundary component of $X$, and $\ast$ the cone point of $C_f(N)$, 
     we define functions \[t\from C_f(N)\to [t_0, \bw{M}{\partial M}) \quad \mbox{and}\quad p\from C_f(N)\smallsetminus\{\ast\}\to N\] by projecting onto coordinates.
    For $x\in C_f(N)$, we write $x = (t(x),p(x))$. (The second coordinate is irrelevant when $t(x) = t_0$.) 
\end{notation}
    
    \begin{lemma}\label{lem:XisMhat}
    Let \[\xi\from [b,\bw{M}{\partial M})\to [0,\bw{M}{\partial M})\]
    and 
    \[ \alpha\from [0,b]\to [0,1]\] be orientation-preserving homeomorphisms. 
    There is then a homeomorphism 
    \[ \phi\from X\to \coneoff{M}\]
      with the following properties:
        \begin{enumerate}
            \item\label{itm:MdeltoM} The image of $\Mdel$ in $\coneoff{M}$ is $M$. 
            \item\label{itm:identityfar} If $x$ is not in $C_f(N)$ for any boundary component $N$ of $M$, then $\phi(x) = x$.
            
            \item\label{itm:cones} Let 
            $N$ be a boundary component of $M$, and $y\in C_f(N)$.
            Then, identifying $N_{\bw{M}{\partial M}}(N)\subset M$ with $[0,\bw{M}{\partial M})\times_{\cosh(t)} N$,
            \[ \phi(y) = \begin{cases}
                (p(y),\alpha(t(y)))\in N\times[0,1]/\sim & t(y)\le b\\
                (\xi(t(y)),p(y))\in N_{\bw{M}{\partial M}}(N) & t(y) \ge b.
            \end{cases} \]
        \end{enumerate}
    \end{lemma}
    \begin{proof}
We exhibit a homeomorphism $\phi$ from $X$ to $\coneoff{M}$ by defining maps $\phi_M\from \Mdel\to \coneoff{M}$ and $\phi_C\from C_f(N) \to \coneoff{M}$ on the pieces from the definition~\eqref{eq:Xdef} of $X$, and showing they agree on the overlaps.

If $p$ is in the component $N$ of $\partial M$ and $t\in [0,\bw{M}{\partial M})$, we set $E_{\partial M}(p,t) = E_N(p,t)$.  For the orthogonal projection map $\pi_t$ defined in Definition~\ref{def:proj}, and for any such $p,t$, we have
\[ \pi_t(E_{\partial M}(p,t)) = p.\]
Now we define $h\from \Mdel\to M$ by using $\xi$ to stretch a regular neighborhood of $\partial \Mdel$ in $\Mdel$ to fit onto a regular neighborhood of $\partial M$ in $M$: 
\begin{equation*}
    h(x) = \begin{cases}
        x & x\in M\smallsetminus N_{\bw{M}{\partial M}}(\partial M)\\
        E_{\partial M}(\pi_t(x),\xi(t)) & x\in E_{\partial M}(\partial M,t),\ t\in [b,\bw{M}{\partial M})
    \end{cases}.
\end{equation*}
 \begin{figure}[htbp]
    \centering \includegraphics[width=\linewidth]{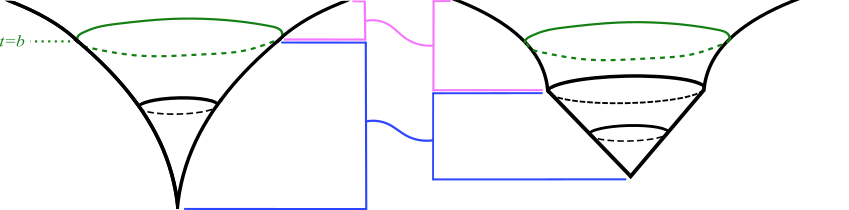}
    \caption{See Figure~\ref{fig:cone1} for descriptions of the objects on the left and right. The pink brackets indicate $\phi_M$; the blue brackets indicate $\phi_C$.}
        \label{fig:conemap}
\end{figure}
This is clearly a homeomorphism. 
If $\iota_M\from M\to \coneoff{M}$ is the inclusion, we define $\phi_M\from \overline{M}\to \widehat{M}$ by
\[ \phi_M = \iota_M\circ h.\]
See Figure~\ref{fig:conemap}.
For any boundary component $N$ of $M$, the map $\phi_M$ determines  $\phi_C(t,p)$ for $t\in [b,\bw{M}{\partial M})$; it remains to consistently define $\phi_C$ on the rest of the cones $C_f(N)$.

 For $t\in [t_0,b]$ and $p\in N$, define
\begin{equation}\label{eq:phiC} \phi_C(t,p) = (p,\alpha(t)).\end{equation}
Warped product coordinates appear on the left-hand side of~\eqref{eq:phiC}; the coordinates on the right-hand side come from the description of the topological cone-off 
 in equation~\eqref{eq:coneoff} of Definition~\ref{def:coneoff}.
In the definition~\eqref{eq:Xdef}, the points $E_N(p,b)\in \Mdel $ and $(b,p)\in C_f(N)$ are identified in $X$; we need to check these go to the same point in $\coneoff{M}$ under $\phi_C$ and $\phi_M$.  Indeed \[\phi_M(E_N(p,b)) = \iota_M(h(E_N(p,b)))=\iota_M(p,\xi(b))=\iota_M(p, 0)=(p,1).\]
We also have $\phi_C(b,p)= (p,\alpha(b)) = (p,1)$ as desired, so the map $\phi$ assembled from $\phi_M$ and the $\phi_C$ is well-defined.  
Since $\phi$ is a continuous bijection of compact metric spaces, it is a homeomorphism.  
The conditions~\eqref{itm:MdeltoM}--\eqref{itm:cones} are satisfied by construction.
\end{proof}

\begin{proof}[Proof of Theorem~\ref{maintheorem}]
Lemma~\ref{lem:Xnegcurved} shows that the space $X$ is negatively curved.  Lemma~\ref{lem:XisMhat} shows that $X$ is homeomorphic to $\coneoff M$.
To obtain the metric $\dhat$ on $\coneoff{M}$, we push forward the path metric on $X$.

The desired local  isometric embedding of $\Mdel$ into $\coneoff{M}$ comes from item~\eqref{itm:MdeltoM} in Lemma~\ref{lem:XisMhat}.
\end{proof}

\section{Proof of Theorem \ref{subspacetheorem}}
In this section we retain the assumptions made at the beginning of the last section, in particular fixing the constants $b,c,\delta,t_0,K$, and the cone warping function $f$ with parameters $(b,\delta,K)$.
We moreover fix a closed locally convex subset $S$ in $M$ and a $b'\in (b,\bw{M}{\partial M})$ so the additional assumptions of Theorem~\ref{subspacetheorem} all hold.

Our model $X$ for $\coneoff{M}$ is described above in Definition~\ref{def:X}.  It consists
of a constant curvature part, metrized as $\Mdel = M\smallsetminus N_b(\partial(M))$, together with the \emph{warped cones}
\[C_f(N) = [t_0, \bw{M}{\partial M}) \times_{f} N\]
defined for each boundary component $N$ of $M$.  
The metric on the union comes from gluing together the path metrics on the pieces. Recall the notation for coordinate functions $(t(x), p(x))$ for points $x$ in $C_f(N)$ outlined in \ref{not:coords}

We now define a subset of $X$ which will be our model for $\coneoff{S}$.  Namely, let
\[ Y = (S \smallsetminus N_b(\partial M)) \cup \left( \bigsqcup_{N\in \mathcal{N}} \left\{ x \in C_f(N)\ \mid\ p(x)\in P_b,\ t(x)\le b\right\}\right)/\sim.\]
\begin{figure}[htbp]
    \centering
\includegraphics[width=\linewidth]{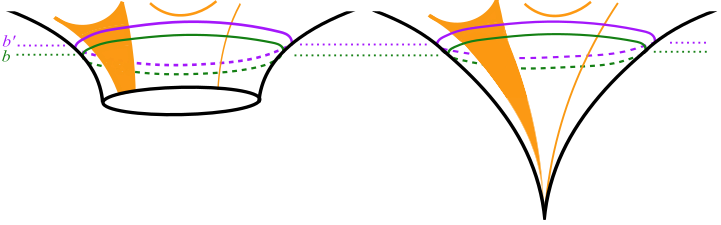}
    \caption{A cartoon of $S\subset M$ (on the left) and $Y\subset X$ (on the right).  The regions above level $b$ are Riemannian isometric.}
    \label{fig:cone2}
\end{figure}
See Figure~\ref{fig:cone2} for a schematic.

We must show that $Y$ is locally convex in $X$.  This will follow from three lemmas.  
\begin{lemma}\label{lem:Mpart}
    $Y$ is locally convex at any point whose distance from the cone point set is greater than $b-t_0$.
\end{lemma}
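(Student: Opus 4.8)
The plan is to show that $Y$ is locally convex at a point $p$ with $d(p, \text{cone points}) > b - t_0$ by producing, for each such $p$, a convex neighborhood of $p$ in $X$ within which $Y$ agrees with a locally convex subset of $M$ (namely $S$, suitably transported). The key observation is that the region of $X$ at distance more than $b - t_0$ from the cone-point set is covered by two kinds of open sets: (i) the image of $M \smallsetminus N_{b}(\partial M)$, which under the homeomorphism $\phi$ of Lemma~\ref{lem:XisMhat} is Riemannian isometric to $M \smallsetminus N_b(\partial M)$ with its hyperbolic metric (so locally $\CAT(-1)$); and (ii) for each boundary component $N$, the part of the warped cone $C_f(N)$ with $t > t_0 + (b - t_0)/1$... more precisely the set of $(t,s)$ with $t$ bounded away from $t_0$. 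On the overlap region $t \in [b, \bw{M}{\partial M})$ these are isometric by Lemma~\ref{lem:conesCATK}. So it suffices to treat a point $p$ lying in one warped cone $C_f(N)$ with $t$-coordinate in $(t_0, \bw{M}{\partial M})$, keeping in mind the case $p \in \Mdel$ reduces to local convexity of $S$ in $M$, which is a hypothesis.

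First I would fix $p = (t_p, s_p) \in C_f(N)$ with $t_p > t_0$, and choose a radius $r$ small enough that the ball $B_r(p)$ in $X$ is contained entirely in $C_f(N)$ (possible since $t_p > t_0$ means $p$ is at positive distance from the cone point, which is the only bad point of $C_f(N)$), is contained in a convex ball (using that $C_f(N)$ is $\CAT(K)$ with $K < 0$, so small metric balls are convex — and shrinking $r$ below the relevant buffer/injectivity scale), and is small enough that the $\partial M$-factor stays within a single convexity-radius ball of $S \cap \partial M = P_0$ (here invoking condition~\ref{cond:Sbuffer}, which gives $\bw{\partial M}{P_t} > c/2$, hence a definite lower bound on the local convexity radius of $P_t$ in $\partial M$ via Lemma~\ref{lem:lcrpositive} applied in the fiber). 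Within such a ball, $Y \cap B_r(p)$ is, in the warped-product coordinates, the set $\{(t,s) : s \in P_b\}$ intersected with the ball — that is, it is a ``product-like'' slice over the locally convex set $P_b \subset N$.

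The heart of the argument is then a warped-product convexity computation: a geodesic of $C_f(N) = I \times_f N$ projects to a path in $N$, and one must check that if both endpoints lie over $P_b$ then the whole geodesic lies over $P_b$. The clean way to see this is to lift to the universal cover: the relevant ball lifts isometrically into $I \times_f \widetilde{N}$ (using $\CAT(K) \le \CAT(0)$ so balls lift), $\widetilde{N}$ is $\CAT(\delta^2)$ hence locally uniquely geodesic at the scale in play, and $\widetilde{P_b}$ — the lift of $P_b$ — is convex there because $P_b$ has positive buffer width in $N$. Then I would use the structure of geodesics in a warped product over an interval base: by the Alexander–Bishop description, the $N$-projection of a geodesic in $I \times_f N$ is a reparametrized geodesic of $N$ (a standard fact for warped products with one-dimensional base, since there is no ``room'' to leave the geodesic in the fiber — any length-minimizer's fiber-projection minimizes fiber-length among nearby competitors). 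A geodesic of $\widetilde{N}$ with endpoints in the convex set $\widetilde{P_b}$ stays in $\widetilde{P_b}$, so the geodesic of $C_f(N)$ stays over $P_b$, i.e.\ in $Y$. Finally I would note the gluing is consistent: for $p \in \Mdel$ with $d(p,\partial M) \in [b, \bw{M}{\partial M})$, the isometry between the overlap regions carries $S$-points to $P_b$-slice points (this is where condition~\ref{cond:PbIsotopicP0}, that $P_t$ is the isotopic image of $P_0$, and condition~\ref{cond:PtContainment} are used to identify $Y$ with the expected transported copy of $S$), so the two local pictures agree and $Y$ is locally convex at $p$.

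I expect the main obstacle to be the warped-product geodesic argument: carefully justifying that the $N$-projection of a local geodesic in $[t_0,\bw{M}{\partial M}) \times_f N$ is itself a (reparametrized) local geodesic of $N$, and that this interacts correctly with the local-convexity radius bounds so that ``convex in a small ball of $N$'' upgrades to ``the projected geodesic stays in $P_b$.'' One must also be careful that the radius $r$ can be chosen uniformly as $p$ ranges over the (compact) relevant region, but since $S$ is compact and the bounds from conditions~\ref{cond:PtContainment} and~\ref{cond:Sbuffer} are uniform in $t$, Lemma~\ref{lem:lcrpositive} handles this. The remaining bookkeeping — matching coordinates across the glue locus and handling the purely hyperbolic part via local convexity of $S$ in $M$ — is routine.
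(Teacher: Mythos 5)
Your proposal contains the correct key observation but buries it and then spends most of its effort on an analysis that is both unnecessary and, as written, wrong. Note that the distance in $X$ from the cone point of $C_f(N)$ to a point at level $t$ is exactly $t - t_0$, so the hypothesis that $p$ is farther than $b - t_0$ from the cone point set forces $t(p) > b$. Such a $p$ lies in the \emph{interior} of $\Mdel$, where the metric on $X$ coincides with the hyperbolic metric on $M$ and $Y$ coincides with $S$. The entire lemma is therefore the case you dispatch in one clause (``the case $p\in\Mdel$ reduces to local convexity of $S$ in $M$''), together with the observation that $\lcrS(S)>0$, which is Lemma~\ref{lem:lcrpositive}. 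No warped-cone analysis is needed.

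The bulk of your proposal --- treating $p\in C_f(N)$ via warped-product geodesics --- is essentially the content of Lemma~\ref{lem:conepart}, not this lemma, and is incorrect in the region under consideration here. You describe $Y\cap B_r(p)$ as the product-like slice $\{(t,s):s\in P_b\}$, but that description of $Y$ holds only for $t\le b$; for $t>b$ (which is forced here) $Y$ agrees with $S$, whose warped-coordinate description is $\{(t,s):s\in P_t\}$ with $P_t$ varying in $t$. So ``a geodesic whose endpoints lie over $P_b$ stays over $P_b$'' is not the statement that would need proving. Conditions~\ref{cond:PbIsotopicP0} and~\ref{cond:PtContainment}, which you invoke, are likewise not needed for this lemma.
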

\begin{lemma}\label{lem:conepart}
    $Y$ is locally convex at any non-cone-point in $Y\cap (\bigsqcup C_f(N))$.
\end{lemma}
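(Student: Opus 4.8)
The plan is to carry out the argument inside a single warped cone $C_f(N)$, where we can exploit both the $\CAT(K)$ geometry and the explicit warped‑product structure. First I would reduce to $C_f(N)$: since near the cones $X$ is obtained by gluing each $C_f(N)$ to $\Mdel$ along the Riemannian region $\{t\ge b\}$, and a short path between two points near a cone gains nothing by detouring through $\Mdel$, a small enough ball of $X$ about a non‑cone‑point $p$ of $C_f(N)$ coincides with the corresponding ball of $C_f(N)$. So it suffices to show $Y\cap C_f(N)$ has convex intersection with small balls of $C_f(N)$. I would then record two features of $C_f(N)$, using Lemma~\ref{lem:conesCATK}: the distance from $(t,s)$ to the cone point is $t-t_0$, so $t$ is convex along every geodesic and each $\{t\le s\}$ is a metric ball about the cone point, hence convex; and the fiber component $\sigma$ of a geodesic $\gamma=(t(\cdot),\sigma(\cdot))$ is, after reparametrization, a geodesic of $N$, with $\operatorname{length}_N\sigma\le\operatorname{length}\gamma/f(t_{\min})$, where $t_{\min}$ is the least value of $t$ along $\gamma$. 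Finally, since $P_b$ has positive buffer width in $\partial M$ by condition~\ref{cond:Sbuffer} (hence is locally convex), and $P_b$, $S$ are compact, Lemma~\ref{lem:lcrpositive} supplies uniform positive local convexity radii $\rho_{P_b}$ of $P_b$ in $N$ and $\rho_S$ of $S$ in $M$.

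Now fix $p=(t_\ast,s_\ast)\in Y\cap C_f(N)$ with $t_\ast>t_0$, and take $r>0$ small enough that $B_r(p)\subseteq\{t_0<t<\bw{M}{\partial M}\}$, that $2r<\rho_S$ and $2r/f(t_\ast-r)<\rho_{P_b}$, and that $2r$ is below the convexity radii of $N$ and $M$. Given $q_1,q_2\in Y\cap B_r(p)$, let $\gamma=(t(\cdot),\sigma(\cdot))$ be the geodesic between them; it lies in the convex ball $B_r(p)$. The endpoints of $\sigma$ lie in the level‑$t_i$ cross‑section of $Y$, which is $P_b$ if $t_i\le b$ and $P_{t_i}$ if $t_i\ge b$; either way condition~\ref{cond:PtContainment} places them in $P_b$. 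Being a geodesic of $N$ of length less than $\rho_{P_b}$ with endpoints in $P_b$, $\sigma$ stays in $P_b$. Next I would split $[0,1]$ using convexity of $t$: the set $J=\{u:t(\gamma(u))\le b\}$ is a subinterval. On $J$, $\gamma(u)=(t(u),\sigma(u))$ with $t(u)\in(t_0,b]$ and $\sigma(u)\in P_b$, so $\gamma(u)\in Y$ by the definition of $Y$ below level $b$. On each of the at most two complementary arcs, $\gamma$ restricts to an arc $\gamma'$ lying in $\{t\ge b\}$, which by the ``moreover'' clause of Lemma~\ref{lem:conesCATK} is isometric to $N_{\bw{M}{\partial M}}(N)\smallsetminus N_b(N)\subseteq M$, with $Y\cap\{t\ge b\}$ matching $S\cap\bigl(N_{\bw{M}{\partial M}}(N)\smallsetminus N_b(N)\bigr)$. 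Since the interior of $\gamma'$ has $t>b$, there the isometry lands in an open subset of $M$ and carries $\gamma'$ to a geodesic segment $\bar\gamma'$ of $M$ of length less than $\rho_S$ whose endpoints lie in $S$ (one endpoint is some $q_i$; the other, if at level $b$, is $(b,\sigma(u_j))$ with $\sigma(u_j)\in P_b$). Hence $\bar\gamma'$ lies in $B_{\rho_S}(x_0)\cap S$ for $x_0$ an endpoint, so $\bar\gamma'\subseteq S$ and $\gamma'\subseteq Y$. Therefore $\gamma\subseteq Y$, and $Y\cap B_r(p)$ is convex.

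I expect the crux to be the transition across level $b$: two points of $Y$ close to level $b$ may be joined by a geodesic whose radial coordinate crosses $b$, and $Y$'s cross‑section jumps there — from the wide $P_b$ below to the possibly‑smaller $P_t$ above — so one must rule out that the geodesic leaves $Y$. The mechanism making it work is that convexity of the radial coordinate confines $\{t\le b\}$ along the geodesic to a single arc, where the generous cross‑section $P_b$ suffices, while the complementary arcs lie in a region isometric to $M$, where the local convexity of $S$ itself does the work.
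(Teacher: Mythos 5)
Your argument reaches the same conclusion by essentially the same mechanism as the paper but organizes it differently, and the organization you chose is arguably cleaner. Where the paper splits into two cases depending on how the radial height $t(u)$ of an interior point compares to $t(z),t(w)$ (and then re-chooses $w$ to make $t$ monotone), you use convexity of $t$ along geodesics to observe that $J=\{u:t(\gamma(u))\le b\}$ is a subinterval, and handle $J$ and its at most two complementary arcs directly. On $J$ you control the fiber projection; on the complementary arcs you use the isometry with the collar in $M$ and $\lcrS(S)$, which is exactly the paper's Case~2 mechanism. The one substantive divergence is in how the fiber projection is shown to lie in $P_b$: the paper bounds $\operatorname{length}_N(\sigma)$ by $c\le\injrad(\partial M)$ (via the $f(L)$ estimate) and then invokes the buffer-width hypothesis~\ref{cond:Sbuffer} directly, whereas you invoke Lemma~\ref{lem:lcrpositive} to get a local convexity radius $\rho_{P_b}$ for $P_b$ and only bound $\operatorname{length}_N(\sigma)<\rho_{P_b}$. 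Your version needs the additional fact that $\sigma$ is a \emph{minimizing} geodesic of $N$ (not merely a local one), since $\lcrS$ convexity only controls minimizing geodesics, and a priori $\rho_{P_b}$ could exceed $\injrad(N)$. This is in fact true — the fiber projection of a minimizing warped-product geodesic is minimizing in the fiber (if $\sigma''$ were a shorter curve with the same endpoints, replacing $\sigma$ by a reparametrization of $\sigma''$ with proportional speed strictly shortens $\gamma$) — but you should state it, or more simply follow the paper and force $\operatorname{length}_N(\sigma)<c$ by adding the constraint $2r/f(t_\ast-r)<c$, which lets you use~\ref{cond:Sbuffer} directly on the local geodesic $\sigma$ without worrying about minimality. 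With that small repair the proof is sound.
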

\begin{lemma}\label{lem:conepoint}
    $Y$ is locally convex at any cone point.
\end{lemma}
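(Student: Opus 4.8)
My plan is to reduce everything to the geometry of a single warped cone near its apex, and then split into two cases according to whether the relevant geodesic passes through the apex. Fix a boundary component $N$ of $M$ with cone point $v\in C_f(N)\subset X$, and recall from Lemma~\ref{lem:conesCATK} that $C_f(N) = [t_0,\bw{M}{\partial M})\times_f N$ is $\CAT(K)$ for some $K<0$. Since $f(t_0)=0$ and $f$ is increasing, the radial paths $t\mapsto (t,x)$ are geodesics and $d_{C_f(N)}(v,(t,x)) = t-t_0$; consequently, for all sufficiently small $r>0$ the ball $B_r(v)$ lies inside the open cone region of $X$, equals $[t_0,t_0+r)\times_f N$, and --- being a metric ball in a $\CAT(K)$ space with $K\le 0$ --- is convex in $X$. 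Any geodesic of $X$ with endpoints in $Y\cap B_r(v)$ has length $<2r$, hence stays in $B_{2r}(v)$, so it is enough to prove that $Y\cap B_r(v) = [t_0,t_0+r)\times_f P_b$ is convex in $B_{2r}(v)$. Finally, exactly as in the proof of Lemma~\ref{lem:conesCATK}, $N$ is $\CAT(\delta^2)$, so geodesics of $N$ shorter than $\pi/\delta = c$ are unique; combined with condition~\ref{cond:Sbuffer} and the definition of buffer width, this shows that any local geodesic of $N$ of length less than $c$ with endpoints in $P_b$ is contained in $P_b$ --- a maximal subarc meeting $P_b$ only in its endpoints would be a non-degenerate local geodesic of length $<c = 2\cdot\tfrac c2 < 2\bw{\partial M}{P_b}$, which is impossible.

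Now let $p=(t_p,s_p)$ and $q=(t_q,s_q)$ lie in $Y\cap B_r(v)$, so $s_p,s_q\in P_b$ and $t_p,t_q<b$, and let $\gamma$ be the geodesic joining them. If $\gamma$ passes through $v$, then $\gamma=[p,v]\cup[v,q]$ is the concatenation of the radial segments $\{(t,s_p):t_0\le t\le t_p\}$ and $\{(t,s_q):t_0\le t\le t_q\}$, both of which lie in $Y$ since $s_p,s_q\in P_b$ and their $t$-coordinates never exceed $b$; thus $\gamma\subset Y$. If instead $\gamma$ avoids $v$, then $\gamma$ lies in the Riemannian warped product $(t_0,b)\times_f N$, and by the structure of geodesics in warped products (from the analysis underlying \cite{AB}; cf.\ \cite[\S 3.6.4]{BBI}) its $N$-projection $\gamma_N$ is, up to reparametrization, a geodesic arc of $N$; moreover, since the space of directions of $C_f(N)$ at $v$ is isometric to the $\CAT(1)$ space $(N,\delta d_N)$ and $\gamma$ misses $v$, the length of $\gamma_N$ is less than $\pi/\delta = c$. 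By the previous paragraph $\gamma_N\subset P_b$, so $\gamma\subset[t_0,b]\times_f P_b\subset Y$. In either case $\gamma\subset Y$, so $Y\cap B_r(v)$ is convex and $Y$ is locally convex at $v$; since $v$ was an arbitrary cone point, the lemma follows.

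The main obstacle is the geodesic-structure input used in the second case: that a geodesic of the warped cone $C_f(N)$ missing the apex projects to a reparametrized geodesic arc of $N$ of length less than $c$, the bound coming from the ``opening angle'' $\pi$ of the cone --- equivalently, from the link at $v$ being the $\CAT(1)$ space $(N,\delta d_N)$, with $\delta = f'(t_0^+)$, which is $\CAT(1)$ by Lemma~\ref{lem:reim_bdd_gives_cat} after rescaling. I would obtain this either by isolating the needed statement from the geodesic analysis behind Alexander--Bishop's Theorem~\ref{AB:CBA}, or directly by comparison with the Euclidean cone $[t_0,\infty)\times_{\delta(t-t_0)}N$: there a geodesic missing the apex lies in the subcone over the unique minimizing $N$-arc joining its endpoints, that subcone is a flat Euclidean sector of angle less than $\pi$, a geodesic inside it is an ordinary straight segment, and such a segment sweeps polar angle monotonically through less than $\pi$ --- i.e.\ its $N$-projection has length less than $\pi/\delta = c$ --- after which one transfers the estimate to $C_f(N)$ using its $\CAT(K)$ comparison with this tangent cone.
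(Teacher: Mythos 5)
Your argument is essentially the paper's, recast slightly: where the paper argues by contradiction, comparing the subcone over the projected arc with the explicit Euclidean cone $[t_0,b)\times_{\delta(t-t_0)}J$ and showing that if the projected arc had length $\ge c$ the geodesic would be forced through the apex, you phrase the same comparison directly in terms of the angle subtended at the apex in the space of directions $(N,\delta d_N)$ being $<\pi$, so that the projected arc has length $<c$, and then both proofs invoke hypothesis (B3) and the definition of buffer width in the same way. Your closing paragraph correctly identifies the one step that needs to be expanded --- that a geodesic missing the apex has $N$-projection of length $<\pi/\delta$ --- and proposes exactly the Euclidean-cone comparison the paper actually carries out, so this is the same proof modulo presentation.
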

The lemmas will be proved after the proof of Theorem~\ref{subspacetheorem}.
\begin{proof}[Proof of Theorem~\ref{subspacetheorem}]
   Lemmas~\ref{lem:Mpart}, \ref{lem:conepart}, and \ref{lem:conepoint} account for all points in $Y$, so $Y$ is locally convex in $X$.

   We choose a homeomorphism $\xi\from [b,\bw{M}{\partial M})\to [0,\bw{M}{\partial(M)})$ so that $\xi$ restricts to the identity on the interval $[b',\bw{M}{\partial{M}})$. 
   Lemma~\ref{lem:XisMhat} gives a homeomorphism 
   \[ \phi \from X\to \coneoff{M} = M\cup (\partial M\times [0,1])/\sim,\] 
   which sends each point $x$ of $\Mdel$ in $X$ either to $x$ (if it is outside the 
   $b'$--neighborhood of the boundary), or to a point on a shortest path from $x$ to $\partial M$.  Points inside the $\bw{M}{\partial M}$--neighborhood of $\partial M$ which are equidistant from the boundary are sent to points which are equidistant from the boundary.  
   More precisely, for $y\in C_f(N)$ there are two cases.  When $t(y)\ge b$,
   the point $\phi(y)$ lies in $N_{\bw{M}{\partial{M}}}(N)\cong [0,\bw{M}{\partial{M}})\times_{\cosh(t)}N$.  With respect to these warped product coordinates, and using Notation~\ref{not:coords}, we have 
   \[ \phi(y) = (\xi(t(y)),p(y)).\]
   When $t(y)<b$, the point $\phi(y)$ lies in the cone $C(N) = N\times[0,1]/N\times\{0\}$; its coordinates are given by
   \[ \phi(y) = (p(y),\alpha(t(y))).\]
   
   Our assumption~\ref{cond:PbIsotopicP0} together with  Lemma~\ref{lem:XisMhat} will ensure that the homeomorphism $\phi$ sends $Y$ to a set which is isotopic to $\coneoff{S}$.  Specifically, we use the isotopy $\Phi\from [0,b']\times \partial M\to \partial M$ to define an isotopy $\Psi\from [0,1]\times \coneoff{M}\to \coneoff{M}$ piecewise as follows:
   \begin{itemize}
       \item On $Q_1:= M\smallsetminus N_{b'}(\partial M)$, define $\Psi_s(x) = x$  for all $s\in [0,1]$.
       \item On $Q_2:=N_{b'}(\partial M)$. Recall that we may identify $N_{b'}(\partial M)$ with $[0,b')\times_{\cosh t} \partial M$.  Using the coordinates from this identification, define 
       \begin{equation}\label{eq:middlepart} \Psi_s(t,p) = (t,\Phi_{st}\circ(\Phi_{s\xi^{-1}(t)})^{-1}(p)).
       \end{equation}
       \item On the disjoint union $Q_3$ of topological cones $ N\times [0,1]/(N\times\{0\})$: Define
       \[ \Psi_s(p,\zeta) = (\Phi_{sb}^{-1}(p), \zeta).\]
   \end{itemize}

   We first verify that $\Psi_s$ is a homeomorphism for each $s\in [0,1]$.  To check that $\Psi_s$ is well-defined, we must see what happens when the $t$ variable in $N_{b'}(\partial M)$ is equal to $b'$ or to $0$.  Since $\xi(b')= b'$, Equation~\eqref{eq:middlepart} gives
   \[ \Psi_s(b',p) = (b',p),\]
   and this agrees with the definition on $M\smallsetminus N_{b'}(\partial M)$.  
   For $t=0$, Equation~\eqref{eq:middlepart} gives
   \[ \Psi_s(0,p) = (0,\Psi_{sb}^{-1}(p)), \]
   which agrees with the definition on the topological cone.  
    Now for each piece $Q\in \{M\smallsetminus N_{b'}(\partial M),N_{b'}(\partial M),\partial N\times[0,1]/\sim\}$, $\Psi_s$ maps $Q$ homeomorphically to itself, so $\Psi_s$ is a continuous bijection.  Since $\coneoff{M}$ is compact Hausdorff, this implies that $\Psi_s$ is a homeomorphism.

    Define $\cover{Q}_i:=\phi^{-1}(Q_i)$. 
 \begin{claim}
     $\Psi_1\circ\phi$ takes $\cover{Q}_1\cap Y$ bijectively to $Q_1\cap \coneoff{S}$.
 \end{claim}
    Note that $Q_1$ and $\cover{Q}_1$ are the same subset of $M$, and $\Psi_s\circ\phi$ is the identity on this subset for any $s$.  (This is because $\xi(t)=t$ for all $t\in [b',\bw{M}{\partial M})$.)  

\begin{claim}
    $\Psi_1 \circ \phi$ takes $\cover{Q}_2\cap Y$ bijectively to $Q_2\cap \coneoff{S}$
\end{claim}
    We have $\cover{Q}_2\cap Y=\{y\in Y : t(y) \in [b, b']\}$ and $Q_2\cap \coneoff{S} = \{x\in S : d(x,\partial M) \leq b'\}$.  As we have already shown that $\Psi_1$ is a homeomorphism, it suffices to show that the image of the first set is contained in the second, and the preimage of the second is contained in the first.
    For $y\in \cover{Q}_2\cap Y$, we have
   \[ \Psi_1(\phi(y)) = (\xi(t),\Phi_{\xi(t)}\circ \Phi_t^{-1} (p(y))).\]
   Since $p(y) = \pi_t(y)\in P_t$, we have $p(\Psi_1(\phi(y))) = \Phi_{\xi(t)}\circ \Phi_t^{-1} (p(y))\in P_{\xi(t)}$.  This implies $\Psi_1(\phi(y))\in S$.  Moreover $\xi$ maps $[b,b']$ to $[0,b']$ so  $d(\Psi_1(\phi(y)), \partial M)=\xi(t)\leq b'$.   
   
   Now let $x\in S\cap Q_2$ and write $(t,p)=(t(x), p(x))$.  We have 
   \[
        (\Psi_1\circ \phi)^{-1}(x)= (\xi^{-1}(t), \Phi_{\xi^{-1}(t)}\circ \Phi_t^{-1}(p)).
   \]
   Since $t\leq b'$, it follows that $\xi^{-1}(t)\in [b,b']$. Because $x\in S$, we have that $p$ is in $P_t$, and so $p((\Psi_1\circ \phi)^{-1}(x))=\Phi_{\xi^{-1}(t)}\circ \Phi_t^{-1}(p) \in P_{\xi^{-1}(t)}$. Therefore $(\Psi_1\circ \phi)^{-1}(x)$ is in $\cover{Q}_2\cap Y$. 

\begin{claim}
    $\Psi_1 \circ \phi$ takes $\cover{Q}_3\cap Y=\{y\in Y : t(y) \leq b\}$ bijectively to $Q_3\cap \coneoff{S}$. 
\end{claim}
    Let $y\in \cover{Q}_3\cap Y$. For $(p, t) = (p(y), t(y))$ we have
    \[
        \Psi_1(\phi(y)) = (\Phi_b^{-1}(p), \alpha(t)).
    \]
    In this case $p\in P_b$, so $p(\Psi_1(\phi(y)))\in P_0$ by \ref{cond:PtContainment}. Additionally, because $t\leq b$, $\alpha(t)\in[0,1]$ is defined.  In particular $\Psi_1(\phi(y))\in Q_3\cap \coneoff{S}$.
    
    Now let $(p, \zeta)\in Q_3\cap \coneoff{S}$. We have 
    \[
        (\Psi_1\circ \phi)^{-1}(p, \zeta)= (\Phi_b(p), \alpha^{-1}(\zeta)).
    \]
    Note that $\alpha^{-1}(\zeta)\leq b$, and \ref{cond:PbIsotopicP0} gives $\Phi_b(p)\in P_b$. Therefore $(\Psi_1\circ \phi)^{-1}(p, \zeta) \in \cover{Q}_3\cap Y$.

  Pushing forward the metric from $X$ under $\Psi_1\circ\phi$, we obtain a metric with respect to which $\coneoff{S}$ is locally convex.  (The isotopy $\Psi$ justifies Remark~\ref{rem:isotopy}.)
\end{proof}
It remains to prove the lemmas.  

\begin{proof}[Proof of Lemma~\ref{lem:Mpart}]
Suppose that $y\in Y$ is farther than $b-t_0$ from any cone point.  Then $y$ is in the interior of $\Mdel$.  In particular there is an $\varepsilon>0$ so that the pair
$(B_\varepsilon(y),B_\varepsilon(y)\cap Y)$ in $X$ is isometric to the pair
$(B_\varepsilon(y),B_\varepsilon(y)\cap S)$ in $M$.  The local convexity radius of $Y$ at $y$ in $X$ is therefore at least the minimum of $\varepsilon$ and 
$\lcrS(S)$, which is positive by Lemma~\ref{lem:lcrpositive}.
\end{proof}
\begin{proof}[Proof of Lemma~\ref{lem:conepart}]
Fix $y = (t(y),p(y))$ a non-cone-point in $Y\cap C_f(N)$ for some boundary component $N$.  If $t(y)>b$, then $Y$ is locally convex at $y$ by Lemma~\ref{lem:Mpart}, so we suppose $t(y) \le b$.  Let 
\[ L = \frac{1}{3}(2t(y)+t_0). \]
(The set of points so that $t(x)=L$ is the copy of $N$ which is $\frac23$ the distance from the cone point to $y$.)
We set 
\begin{equation}\label{eq:epsilon} \varepsilon <\min\left\{\frac{b'-t(y)}{3},\frac{t(y)-L}{2}, \frac{c f(L)}{2},\frac{\lcrS(S)}{2}\right\}.\end{equation}
Note that all four of the quantities on the right hand side of~\eqref{eq:epsilon} are positive; 
$\lcrS(S)/2$ by Lemma~\ref{lem:lcrpositive}, and the others because $t(y) \in (t_0,b')$.

We first claim that $B_\varepsilon(y)$ is convex in $M$.  For $z,w\in B_\varepsilon(y)$ we must show any geodesic $[z,w]$ lies in $B_\varepsilon(y)$.  By the triangle inequality, such a geodesic stays inside $B_{3\varepsilon}(y)$.  Since $\varepsilon < \frac{1}{3}(b'-t(y))$, we have $t(x)<b'<\bw{M}{\partial M}$ for any $x\in B_{3\varepsilon}(y)$.  In particular the ball $B_{3\varepsilon}(y)$
is contained in $C_f(N)$ and the geodesic $[z,w]$ is also a geodesic in the warped product metric on $C_f(N)$ (as opposed to a path leaving $C_f(N)$).
By Lemma~\ref{lem:conesCATK}, $C_f(N)$ is $\CAT(0)$. In particular, balls in $C_f(N)$ are convex, so the segment $[z,w]$ lies inside $B_\varepsilon(y)$ as desired.

The above argument also shows that restriction of $d_X$ to $B_\varepsilon(y)$ is equal to the restriction of $d_{C_f(N)}$.
Since $f$ is a strictly increasing convex function, all the conclusions of Lemma~\ref{lem:conegeodesics} apply to any geodesic in $B_\varepsilon(y)$.

We now assume $z,w$ are in $Y\cap B_\varepsilon(y)$.  Let $u$ be a point on $[z,w]$.  We must show $u\in Y$.    Using Lemma~\ref{lem:conegeodesics}.\eqref{itm:depthconvex} and our assumption on $\varepsilon$, we have \[t(u)\le \max\{t(z),t(w)\}<b'.\]  To show $u\in Y$, we must show either 
that $p(u)\in P_{t(u)}$ (if $t(u)> b$), or that $p(u)\in P_b$ (if $t(u)\le b$).  To simplify, we introduce the following notation
\[ P^*_t = \begin{cases}
                        P_t & t > b\\
                        P_b & t \le b.
                    \end{cases}\]
In this language we must show $p(u)\in P^*_{t(u)}$.

We divide into cases depending on the relationship between the heights $t(u)$, $t(z)$ and $t(w)$.
\begin{case}\label{case:lessthan}
    The height $t(u)$ is at most $\min\{t(z),t(w)\}$.
\end{case}
We have $P_{t(u)}\supseteq P_{t(z)} \cup P_{t(w)}$ by hypothesis \ref{cond:PtContainment}, and so $P^*_{t(u)}\supseteq P^*_{t(z)} \cup P^*_{t(w)}$.  
Notably this means that the endpoints of the $N$--geodesic $[p(z), p(w)]$ are in $P^*_{t(u)}$.
Since $z$ and $w$ are in $B_\varepsilon(y)$, 
\begin{equation}\label{eq:2epsilon}\tag{$\diamondsuit$}
  d_X(z,w) < 2\varepsilon.
\end{equation}
Because $\varepsilon<\frac12 (t(y)- L)$, the ball $B_\varepsilon(y)$ is disjoint from $[t_0,L]\times N\subset C_f(N)$.
Since the warping function $f$ is increasing and $B_\varepsilon(y)$ is entirely above $\{L\}\times N$,  
\begin{equation}\label{eq:Lbound}\tag{$\spadesuit$} d_X(z,w)\geq d_{\{L\}\times N}((L,p(z)), (L, p(w))) = f(L )d_N(p(z), p(w)).\end{equation}
Combining equations~\eqref{eq:2epsilon} and~\eqref{eq:Lbound} yields 
\[ d_N (p(z), p(w)) < \frac{2\varepsilon}{f(L)}.\]
From 
\[\varepsilon < \frac{c f(L)}{2},\]
we deduce
\[ d_N (p(z), p(w)) < c.\]
The buffer width of $P^*_{t(u)}$ in $\partial M$ is assumed in hypothesis~\ref{cond:Sbuffer} to be greater than $\frac{c}{2}$, so $[p(z), p(w)]$ must be contained in $P^*_{t(u)}$.  In particular $p(u)\in P^*_{t(u)}$, as desired.

\begin{case}\label{case:between}
    The height $t(u)$ is between $t(z)$ and $t(w)$.
\end{case}
Without loss of generality suppose $t(w)< t(u) < t(z)$. By the argument in Case~\ref{case:lessthan}, the $N$--geodesic $[p(z), p(w)]\subseteq P^*_{t(w)}$.
For the sake of contradiction, suppose that $u\in [z,w]$ is not in $Y$.
Applying Case~\ref{case:lessthan} and the convexity of the function $t(\cdot)$ (Lemma~\ref{lem:conegeodesics}.\eqref{itm:depthconvex}), we may re-choose $w$ so that $t(\cdot)$ is monotone on $[z,w]$.
Possibly shortening the segment again, we may assume that $[z,w]$ meets $Y$ only in its endpoints.  Thus, for $t\in (t(w), t(z))$, we must have ${P}^*_t \neq P^*_{t(w)}$. Hence $t(w)\geq b$, since $P^*_t = P_b$ for all $t \leq b$.  This means that $[z,w]$ is actually a shortest path in $M$, contained in some $2\varepsilon$--ball around a point in $S$.  Similarly, the points $z,w$ are points of $S$.
 Since $\varepsilon < \frac{1}{2}\lcrS(S)$, all of $[z,w]$ is contained in $S$, and hence in $Y$.  In particular, $u\in Y$, a contradiction. 
\end{proof}
\begin{proof}[Proof of Lemma~\ref{lem:conepoint}]
Let $y$ be the cone point of $C_f(N)$ for a component $N$ of $\partial M$ so that $N\cap S\ne \emptyset$.  Let $\varepsilon = \frac13 (b-t_0).$  For any $z,w\in B_\varepsilon(y)$, the triangle inequality shows that any geodesic $[z,w]$ lies entirely in $C_f(N)$, and
\[ d_X(z,w) = d_{C_f(N)}(z,w).\]   
Lemma~\ref{lem:conesCATK} shows $C_f(N)$ is $\CAT(0)$, so in fact there is a unique geodesic $[z,w]$, which lies in $B_\varepsilon(y)$.  

We suppose now that $z,w$ lie in $Y\cap B_\varepsilon(y)$.  Note $Y\cap B_\varepsilon(y)$ consists of exactly those points $x$ so that $t(x)\le t_0+\epsilon$ and $p(x)\in P_b$.

If $[z,w]$ goes through the cone point it is locally constant in the $N$ coordinate away from the cone point, by Lemma~\ref{lem:conegeodesics}.\eqref{itm:locallyconstant}, so it is clearly contained in $Y$.

Suppose then that $[z,w]$ avoids the cone point.  We can write $[z,w]$ as a path
\[ \sigma(s) = (\tau(s),\rho(s)),\ s\in[0,1],\]
where $\tau(s)>t_0$ for all $s$.  The path 
 $\rho(s)$ is a (not necessarily constant speed) geodesic in $N$  (see Lemma~\ref{lem:conegeodesics}.\eqref{itm:fibergeodesic}).  We must show that $\rho(s)\in P_b$ for all $s$.  By way of contradiction suppose some $\rho(s) \not\in P_b$.  Replacing $[z,w]$ by a sub-segment if necessary, we may assume that $\rho(s)\not\in P_b$ for all $s\in(0,1)$.  We therefore have \[ d_N(\rho(0),\rho(1))\ge 2\bw{N}{P_b} >c,\]
by Assumption~\ref{cond:Sbuffer}.
Let $J$ be the interval $[0, d_N(\rho(0),\rho(1))]$.  The path $\sigma$ has image in a subset of $C_f(N)$ isometric to
\[ W = [t_0,b)\times_f J.\]
Since $f$ is convex, the metric in $W$ dominates the metric in the warped product
\[ W' = [t_0,b)\times_{\delta(t-t_0)} J,\]
where we recall that
\[ \delta =  f'({t_0}^+) = \frac{\pi}{c}.\]
In particular,
\begin{align*}
    d_X(z,w) & = d_W( (\tau(0),0),(\tau(1),|J|) \\
    &\ge d_{W'}( (\tau(0),0),(\tau(1),|J|)).
\end{align*}
Also note that $W'$ is isometric to the  Euclidean cone on a rescaled copy of $J$,
\[ W'' = [0,b-t_0)\times_t \delta J.\]
Note that $|J|>c$, so the interval $\delta J$ has length greater than $\pi$.  This implies that the geodesic in $W''$ from $(\tau(0)-t_0,0)$ to $(\tau(1)-t_0,\delta |J|)$ goes through the cone point (see  Lemma~\ref{lem:conegeodesics}.\eqref{itm:whenconepoint}).  Thus
\begin{equation*}
    d_X(z,w) \ge (\tau(0)-t_0) + (\tau(1)-t_0).
\end{equation*}
But this implies that $d_X(z,w) = (\tau(0)-t_0) + (\tau(1)-t_0)$, since there is a path of exactly that length going through the cone point.  (This path is locally constant in $N$ away from the cone point.)  By uniqueness of geodesics in a $\CAT(0)$ space, this contradicts the assumption that $[z,w]$ misses the cone point.

  This contradiction allows us to conclude that $\rho(s) \in P_b$, as desired.
\end{proof}

\appendix
\section{The boundary at infinity of the coneoff.} \label{app:boundary}
  In~\cite{Sw20}, \Jacek\ defines the \emph{tree of manifolds} $\mathcal{X}(\mathcal{M})$ based on a collection of manifolds $\mathcal{M}$ as a certain inverse limit of connected sums of elements of $\mathcal{M}$.  We refer to~\cite{Sw20} for the relevant definitions.
  We show in this appendix that the boundary at infinity of the universal cover of $\coneoff{M}$ is such a tree of manifolds.  Note that this boundary is also the Gromov boundary of $\pi_1\coneoff{M}$.
  \begin{theorem}\label{addendum}
    With the hypotheses of Theorem~\ref{maintheorem}, let $\mathcal{N}$ be the collection of boundary components of $M$.  Then 
    the visual boundary of the universal cover of $(\coneoff{M},\dhat)$ is homeomorphic to the tree of manifolds $X(\mathcal{N})$.
  \end{theorem}

    To prove Theorem~\ref{addendum} we will need some more information about the cones described in Lemma~\ref{lem:conesCATK}.  Recall that the \emph{space of directions at $p$}, written $\Sigma_p$, is a set of equivalence classes of (nondegenerate) geodesic segments starting at $p$.  Two geodesic segments are equivalent if the Alexandrov angle between them is zero. 
  The Alexandrov angle makes $\Sigma_p$ into a metric space (see~\cite[II.3]{BH} for more detail).  At any point of a Riemannian manifold, the space of directions can be identified with the unit tangent sphere, and the Alexandrov angle is the angle given by the Riemannian metric at that point.  The cone described in Lemma~\ref{lem:conesCATK} is Riemannian everywhere except for the cone point, where the space of directions is described by the following lemma.
  \begin{lemma}\label{lem:directionspace}\cite{AB}
    Let $N$ and $C_f(N)$ be as in Lemma~\ref{lem:conesCATK}.
    The space of directions at the cone point is homeomorphic to $N$ and Alexandrov angle in this space is given by
    \[ \angle(a,b) = \min\left\{\pi,\frac1\delta d_N(a,b)\right\}. \]    
  \end{lemma}
  \begin{proof}
    We recall that $f\from [t_0,\infty)\to [0, \infty)$ is a cone warping function with parameters $(b,\delta,K)$, as in Definition~\ref{def:conewarp}.  In particular the function $f$ vanishes at $t_0$, and $f'(t_0^+) = \delta$.   Recall also that $C_f(N)$ is a warped product with fiber $N$ and warping function $f$.  
    This description of the space of directions at the cone point of such a warped product follows from~\cite[Proposition 3.7]{FM}, which is a restatement of results from~\cite[p. 1147]{AB}.
  \end{proof}

\begin{proof}[Proof of Theorem~\ref{addendum}]
  We must verify that the universal cover of our model
  $X\cong \widehat{M}$ is a \emph{Riemannian $(\mathcal{N},0)$--pseudomanifold with log-injective singularities}, in the sense of~\cite[Definition 6.1]{Sw20}.  We can then apply~\cite[Theorem 6.2]{Sw20} to deduce that the visual boundary is homeomorphic to the tree of manifolds $X(\mathcal{N})$ as desired.
  The definition~\cite[Definition 6.1]{Sw20} consists of six conditions.
  The first four conditions are immediate from the fact that our space $X$ is a compact nonpositively curved pseudomanifold with isolated singularities, whose metric is Riemannian away from those singularities.

  \Jacek's conditions (5) and (6) concern the space of directions $\Sigma_p$ at a point $p$ of $X$.  
  Taking a small enough neighborhood $U$ of $p$ in $X$, there is a unique geodesic segment $\sigma_x$ from $p$ to any $x\in U$.  This gives a well-defined \emph{logarithm}
  \[\log_p(x) = ([\sigma_x],d_X(p,x)) \]
 mapping  $U\smallsetminus p$ into $ \Sigma_p\times(0,\infty)$.

  \Jacek's Condition (5) is that each $p$ has a neighborhood $U$ so that $\log_p$ is injective on $U\smallsetminus \{p\}$.
  For $p$ non-singular, we may take $U$ to be $B_{\epsilon}(p)$, where $\epsilon$ is any number smaller than half the (Riemannian) injectivity radius at $p$; the logarithm is clearly injective on this neighborhood.
  For $p$ singular, we take $U$ to be a neighborhood of the form
  \[ U \cong [t_0,t_1)\times_f N, \]
  where $t_1-t_0$ is small enough so that $U$ is convex.  
  Now suppose two points $x =(t,n_1)$ and $y = (s,n_2)$ in $U\smallsetminus\{p\}$ satisfy $\log_p(x) = \log_p(y)$.  Since the second factor of the logarithm records the distance, we have $t = s$.  By Lemma~\ref{lem:conegeodesics}.\eqref{itm:locallyconstant}, each of the geodesics $\sigma_x$ and $\sigma_y$ has constant second coordinate.   Lemma~\ref{lem:directionspace} tells us that the Alexandrov angle between $\sigma_x$ and $\sigma_y$ is the minimum of $\pi$ and $\frac1\delta d_N(n_1,n_2)$.  In particular it can only be zero if $n_1 = n_2$ and $x = y$.

  \Jacek's condition (6) is that, for any $p\in X$ and any $r\in (0,\pi)$, every ball of radius $r$ in $\Sigma_p$ with respect to the Alexandrov angle metric is a standard collared disk.  This is immediate for $p$ nonsingular, since the space of directions in a Riemannian manifold is a standard round sphere.  For $p$ equal to a cone point, we again use Lemma~\ref{lem:directionspace}.  This implies that a ball of radius $r$ in $\Sigma_p$ can be identified with a ball of radius $\delta r$ in $N$.  Since $\delta r$ is less than the injectivity radius of $N$, such a ball is indeed a collared disk.  
\end{proof}

\bibliography{refs.bib}

@book{BH,   
AUTHOR = {Bridson, Martin R. and Haefliger, Andr\'{e}},
     TITLE = {Metric spaces of non-positive curvature},
    SERIES = {Grundlehren der mathematischen Wissenschaften [Fundamental
              Principles of Mathematical Sciences]},
    VOLUME = {319},
 PUBLISHER = {Springer-Verlag, Berlin},
      YEAR = {1999},
     PAGES = {xxii+643},
      ISBN = {3-540-64324-9},
   MRCLASS = {53C23 (20F65 53C70 57M07)},
  MRNUMBER = {1744486},
MRREVIEWER = {Athanase Papadopoulos},
       DOI = {10.1007/978-3-662-12494-9},
       URL = {https://doi.org/10.1007/978-3-662-12494-9},
}

@article{AB,
author = {Alexander, Stephanie and Bishop, Richard},
year = {2004},
month = {12},
pages = {1143-1181},
title = {Curvature bounds for warped products of metric spaces},
volume = {14},
journal = {Geometrical and Functional Analysis GAFA},
doi = {10.1007/s00039-004-0487-2}
}

@article{FM,
    AUTHOR = {Fujiwara, Koji and Manning, Jason Fox},
     TITLE = {{${\rm CAT(0)}$} and {${\rm CAT}(-1)$} fillings of hyperbolic
              manifolds},
   JOURNAL = {J. Differential Geom.},
  FJOURNAL = {Journal of Differential Geometry},
    VOLUME = {85},
      YEAR = {2010},
    NUMBER = {2},
     PAGES = {229--269},
      ISSN = {0022-040X},
   MRCLASS = {57N16 (20F60 20F67 53C21)},
  MRNUMBER = {2732977},
MRREVIEWER = {John G. Ratcliffe},
       URL = {http://projecteuclid.org/euclid.jdg/1287580965},
}

@book{BBI,
    AUTHOR = {Burago, Dmitri and Burago, Yuri and Ivanov, Sergei},
     TITLE = {A course in metric geometry},
    SERIES = {Graduate Studies in Mathematics},
    VOLUME = {33},
 PUBLISHER = {American Mathematical Society, Providence, RI},
      YEAR = {2001},
     PAGES = {xiv+415},
      ISBN = {0-8218-2129-6},
   MRCLASS = {53C23},
  MRNUMBER = {1835418},
MRREVIEWER = {Mario Bonk},
       DOI = {10.1090/gsm/033},
       URL = {https://doi.org/10.1090/gsm/033},
}

@misc{MosherSageev,
author = {Mosher, Lee and Sageev, Michah},
title  = {Nonmanifold hyperbolic groups of high cohomological dimension},
note = {Preprint},
year = {1997}
}

@incollection{Gromov93,
    AUTHOR = {Gromov, M.},
     TITLE = {Asymptotic invariants of infinite groups},
 BOOKTITLE = {Geometric group theory, {V}ol. 2 ({S}ussex, 1991)},
    SERIES = {London Math. Soc. Lecture Note Ser.},
    VOLUME = {182},
     PAGES = {1--295},
 PUBLISHER = {Cambridge Univ. Press, Cambridge},
      YEAR = {1993},
   MRCLASS = {20F32 (57M07)},
  MRNUMBER = {1253544},
}

@article{BleilerHodgson,
    AUTHOR = {Bleiler, Steven A. and Hodgson, Craig D.},
     TITLE = {Spherical space forms and {D}ehn filling},
   JOURNAL = {Topology},
  FJOURNAL = {Topology. An International Journal of Mathematics},
    VOLUME = {35},
      YEAR = {1996},
    NUMBER = {3},
     PAGES = {809--833},
      ISSN = {0040-9383},
   MRCLASS = {57M25 (57M50)},
  MRNUMBER = {1396779},
MRREVIEWER = {Ying Qing Wu},
       DOI = {10.1016/0040-9383(95)00040-2},
       URL = {https://doi.org/10.1016/0040-9383(95)00040-2},
}

@misc{MR,
title = {Incubulable hyperbolic $3$-pseudomanifold groups},
author = {Manning, Jason and Ruffoni, Lorenzo},
note = {Manuscript in preparation}
}

@article{Scott78,
    AUTHOR = {Scott, Peter},
     TITLE = {Subgroups of surface groups are almost geometric},
   JOURNAL = {J. London Math. Soc. (2)},
  FJOURNAL = {Journal of the London Mathematical Society. Second Series},
    VOLUME = {17},
      YEAR = {1978},
    NUMBER = {3},
     PAGES = {555--565},
      ISSN = {0024-6107},
   MRCLASS = {55A10},
  MRNUMBER = {494062},
MRREVIEWER = {John Hempel},
       DOI = {10.1112/jlms/s2-17.3.555},
       URL = {https://doi.org/10.1112/jlms/s2-17.3.555},
}

@book{Wise21,
    AUTHOR = {Wise, Daniel T.},
     TITLE = {The structure of groups with a quasiconvex hierarchy},
    SERIES = {Annals of Mathematics Studies},
    VOLUME = 209,
 PUBLISHER = {Princeton University Press, Princeton, NJ},
      YEAR = {[2021] \copyright 2021},
     PAGES = {x+357},
      ISBN = {[9780691170442]; [9780691170459]; [9780691213507]},
   MRCLASS = {20F65 (20F67)},
  MRNUMBER = 4298722,
MRREVIEWER = {Anthony Genevois},
}

@article{GM,
    AUTHOR = {Groves, Daniel and Manning, Jason Fox},
     TITLE = {Dehn filling in relatively hyperbolic groups},
   JOURNAL = {Israel J. Math.},
  FJOURNAL = {Israel Journal of Mathematics},
    VOLUME = {168},
      YEAR = {2008},
     PAGES = {317--429},
      ISSN = {0021-2172},
   MRCLASS = {57M50},
  MRNUMBER = {2448064},
MRREVIEWER = {Colin C. Adams},
       DOI = {10.1007/s11856-008-1070-6},
       URL = {https://doi.org/10.1007/s11856-008-1070-6},
}

@article{Osin,
    AUTHOR = {Osin, Denis V.},
     TITLE = {Peripheral fillings of relatively hyperbolic groups},
   JOURNAL = {Invent. Math.},
  FJOURNAL = {Inventiones Mathematicae},
    VOLUME = {167},
      YEAR = {2007},
    NUMBER = {2},
     PAGES = {295--326},
      ISSN = {0020-9910},
   MRCLASS = {20F67 (20E26 20F06 20F65 57M27)},
  MRNUMBER = {2270456},
MRREVIEWER = {Ilya Kapovich},
       DOI = {10.1007/s00222-006-0012-3},
       URL = {https://doi.org/10.1007/s00222-006-0012-3},
}

@article{GMQC,
    AUTHOR = {Groves, Daniel and Manning, Jason Fox},
     TITLE = {Quasiconvexity and {D}ehn filling},
   JOURNAL = {Amer. J. Math.},
  FJOURNAL = {American Journal of Mathematics},
    VOLUME = {143},
      YEAR = {2021},
    NUMBER = {1},
     PAGES = {95--124},
      ISSN = {0002-9327},
   MRCLASS = {57K32 (20F65)},
  MRNUMBER = {4201780},
MRREVIEWER = {Michael L. Mihalik},
       DOI = {10.1353/ajm.2021.0007},
       URL = {https://doi.org/10.1353/ajm.2021.0007},
}

@article{SP,
    AUTHOR = {Petrosyan, Nansen and Sun, Bin},
     TITLE = {Cohomology of group theoretic {D}ehn fillings {II}},
   JOURNAL = {Adv. Math.},
  FJOURNAL = {Advances in Mathematics},
    VOLUME = {437},
      YEAR = {2024},
     PAGES = {Paper No. 109412, 56},
      ISSN = {0001-8708},
   MRCLASS = {20F67 (20E06 20F10 57K32 57R65)},
  MRNUMBER = {4669335},
       DOI = {10.1016/j.aim.2023.109412},
       URL = {https://doi.org/10.1016/j.aim.2023.109412},
}

@article {Malcev40,
    AUTHOR = {Mal'cev, A.},
     TITLE = {On the faithful representation of infinite groups by matrices},
   JOURNAL = {Mat. Sb.},
    VOLUME = {8 (50)},
      YEAR = {1940},
     PAGES = {405--422},
      NOTE = {English transl., Amer. Math. Soc. Transl. 2 (45):1-18, 1965},
       DOI = {10.1090/trans2/045},
        
}

@article {Sw20,
AUTHOR = {\'Swi\k{a}tkowski, Jacek},
     TITLE = {Trees of manifolds as boundaries of spaces and groups},
   JOURNAL = {Geom. Topol.},
  FJOURNAL = {Geometry \& Topology},
    VOLUME = {24},
      YEAR = {2020},
    NUMBER = {2},
     PAGES = {593--622},
      ISSN = {1465-3060,1364-0380},
   MRCLASS = {20F67 (20F65 57M07)},
  MRNUMBER = {4153650},
MRREVIEWER = {Igor\ Belegradek},
       DOI = {10.2140/gt.2020.24.593},
       URL = {https://doi.org/10.2140/gt.2020.24.593},
}
\bibliographystyle{shortalpha}

\end{document}